\newcommand{\es}{\varnothing}
\title{\sc {The Domination Number of Generalized Petersen Graphs with a Faulty Vertex}}
\author{Jun--Lin Guo\inst{1}
\and
 Kuo--Hua Wu\inst{1}
\and
 Yue-Li~Wang\thanks{National
Science Council of Taiwan Support Grant NSC~100--2221--E--011--067-MY3 and NSC~101--2221--E--011--038--MY3.}%
\inst{1}
 \and
Ton Kloks\thanks{National
Science Council of Taiwan Support Grant
NSC~99--2218--E--007--016.}%
\inst{2}
}
\institute{
Department of Information Management\\
 National Taiwan University of Science and Technology\\
 No.~43, Sec.~4, Keelung Rd., Taipei, 106, Taiwan\\
 {\tt ylwang@cs.ntust.edu.tw}
\and
Department of Computer Science\\
 National Tsing Hua University\\
 No.~101, Sec.~2, Kuang Fu Rd., Hsinchu, Taiwan}
\begin{document}

\maketitle

\begin{abstract}
In this paper, we investigate the domination number of generalized
Petersen graphs $P(n, 2)$ when there is a faulty vertex. Denote by
$\gamma(P(n,2))$ the domination number of $P(n,2)$ and
$\gamma(P_f(n,2))$ the domination number of $P(n,2)$ with a faulty
vertex $u_f$. We show that $\gamma(P_f(n,2))=\gamma(P(n,2))-1$ when
$n=5k+1$ or $5k+2$ and $\gamma(P_f(n,2))=\gamma(P(n,2))$ for the
other cases.

\vskip 0.2in \noindent

\noindent {\bf Keywords:} Domination; Domination alternation;
generalized Petersen Graph.

\end{abstract}

\section{Introduction}
\label{Introduction}

A graph $G$ is an ordered pair $(V(G),E(G))$ consisting of a set
$V(G)$ of vertices and a set $E(G)$ of edges. When the context is
clear, $V(G)$ and $E(G)$ are simply written as $V$ and $E$,
respectively. The {\em open neighborhood} of vertex $v\in V$ is the
set $N(v) = \{u\in V| uv\in E\}$. The {\em closed neighborhood} of
vertex $v\in V$ is the set $N[v] =N(v)\cup \{v\}$. For a set $S$ of
vertices, $N[S]=\bigcup_{v\in S} N[v]$. A set $S\subseteq V$ is a
{\em dominating set} of $G$ if $N[S]=V$ \cite{Berg62}. The {\em
domination number} of $G$, denoted by $\gamma(G)$, is the
cardinality of a minimum dominating set.

For two natural numbers $n$ and $k$ with $n\geqslant 3$ and
$1\leqslant k \leqslant \left\lfloor \frac{n-1}{2}\right\rfloor$,
the {\em generalized Petersen graph} $P(n,k)$ is a graph on $2n$
vertices with $V(P(n,k))=\{u_i,v_i|1\leqslant i\leqslant n\}$ and
$E(P(n,k))=\{u_iu_{i+1},u_iv_i,v_iv_{i+k}|1\leqslant i\leqslant n\}$
with subscripts modulo $n$ \cite{Bigg93,Coxe50,Watk69}. Hereafter,
all operations on the subscripts of vertices are taken modulo $n$
unless stated otherwise.

In \cite{Behz08}, Behzad, Behzad, and Praeger showed that
$\gamma(P(n,2)) \leqslant \left\lceil \frac{3n}{5} \right\rceil$ for
odd $n\geqslant 3$ and conjectured that $\left\lceil \frac{3n}{5}
\right\rceil$ is exactly the domination number of $P(n,2)$. In
\cite{Ebra09}, Ebrahimi, Jahanbakht, and Mahmoodian (independently,
Yan, Kang, and Xu \cite{Yan09} and Fu, Yang, and Jiang \cite{Fu09})
affirmed that $\gamma(P(n,2))=\left\lceil \frac{3n}{5} \right\rceil$
for $n\geqslant 3$.

In this paper, we are concerned with $\gamma(P_f(n,2))$ when there
is a faulty vertex $u_f$ in $P(n,2)$, where $\gamma(P_f(n,2))$
denotes the domination number of $P(n,2)$ with faulty vertex $u_f$,
i.e., $u_f$ is removed from $P(n,2)$. Thus a faulty vertex cannot be
chosen as a vertex in the dominating set. We shall show that, for
$n\geq 3$,
\[\gamma(P_f(n,2))= \begin{cases}
\gamma(P(n,2))-1 & \quad \text{if $n=5k+1$ or $5k+2$} \\
\gamma(P(n,2)) & \quad \text{otherwise.}
\end{cases}
\]

The {\em alteration domination number} of $G$, denoted by $\mu(G)$,
is the minimum number of points whose removal increases or decreases
the domination number of $G$ \cite{Baue83}. The {\em bondage number}
of $G$, denoted by $b(G)$, is the minimum number of edges whose
removal from $G$ results in a graph with larger domination number
\cite{Fink90}. It can be regarded as the fault tolerance problem
when removing vertices or edges from a graph. Fault tolerance is
also an important issue on engineering \cite{Rand78}. This motivates
us to study the the domination number of $P(n,2)$ with a faulty
vertex. By our result, we can find that the lower and upper bounds
of $\mu(P(n,2))$ are as follows: $\mu(P(n,2))=1$ if $n=5k+1$ or
$5k+2$; otherwise, $\mu(P(n,2))\geq 2$. Moreover, we can find that
$2\leqslant b(P(n,2))\leqslant 3$ if $n=5k$, $5k+3$ or $5k+4$.

This paper is organized as follows. In Section~\ref{Preliminaries}
we review some preliminaries of dominating sets in generalized
Petersen graphs. In Section~\ref{faulty Petersen} some properties
are introduced when there is a faulty vertex in $P(n,2)$.
Section~\ref{main results} contains our main results. We conclude in
Section~\ref{Conclusion}.

\section{Preliminaries}
\label{Preliminaries}

In this paper, we follow the terminology of \cite{Ebra09}. However,
for clarity, we introduce some of them as follows.

Let $P(n,2)-u$ denote the resulting graph after $u$ is removed from
$P(n,2)$. In particular, $P(n,2)-u_f$ is denoted by $P_f(n,2)$ where
$u_f$, for some $1\leqslant f\leqslant n$, is the faulty vertex in
$P(n,2)$. We also use $S+u$ and $S-u$ to denote adding an element
$u$ to a set $S$ and removing an element $u$ from a set $S$,
respectively. In the rest of this paper, $S$ always stands for a
domination set of $P_f(n,2)$. A minimum dominating set of $G$ is
called a {\em $\gamma(G)$-set}. When the graph $G$ is clear from the
context, $\gamma(G)$-set is written as $\gamma$-set. A {\em block}
of $P(n, 2)$ is an induced subgraph of 5 consecutive pairs of
vertices (see Figure \ref{fig1}). Denote by $\mathcal{B}_i$ if a
block of $P(n, 2)$ is centered at $u_i$ and $v_i$. When there is no
possible ambiguity, $\mathcal{B}_i$ and $V(\mathcal{B}_i)$ are used
interchangeably. The vertices of $\mathcal{B}_i-u_i$ can be
partitioned into $R_i=\{v_{i+1},u_{i+2},v_{i+2}\}$,
$L_i=\{v_{i-1},u_{i-2},v_{i-2}\}$, and
$M_i=\{u_{i-1},v_i,u_{i+1}\}$. Let $N^+(R_i)=N[R_i]\setminus
\mathcal{B}_i=\{v_{i+3},u_{i+3},v_{i+4}\}$,
$N^+(L_i)=N[L_i]\setminus
\mathcal{B}_i=\{v_{i-3},u_{i-3},v_{i-4}\}$, and
$\gamma_i(S)=|\mathcal{B}_i\cap S|$. When the context is clear,
$\gamma_i(S)$ is written as $\gamma_i$. Let
$F=\{f-2,f-1,f,f+1,f+2\}$ which contains the indices in
$\mathcal{B}_f$.


\begin{figure}[htb]
\begin{center}
\unitlength=1pt
\begin{picture}(360,120)(0,0)
\put(70,-10){\scriptsize{
\includegraphics[scale=0.5]{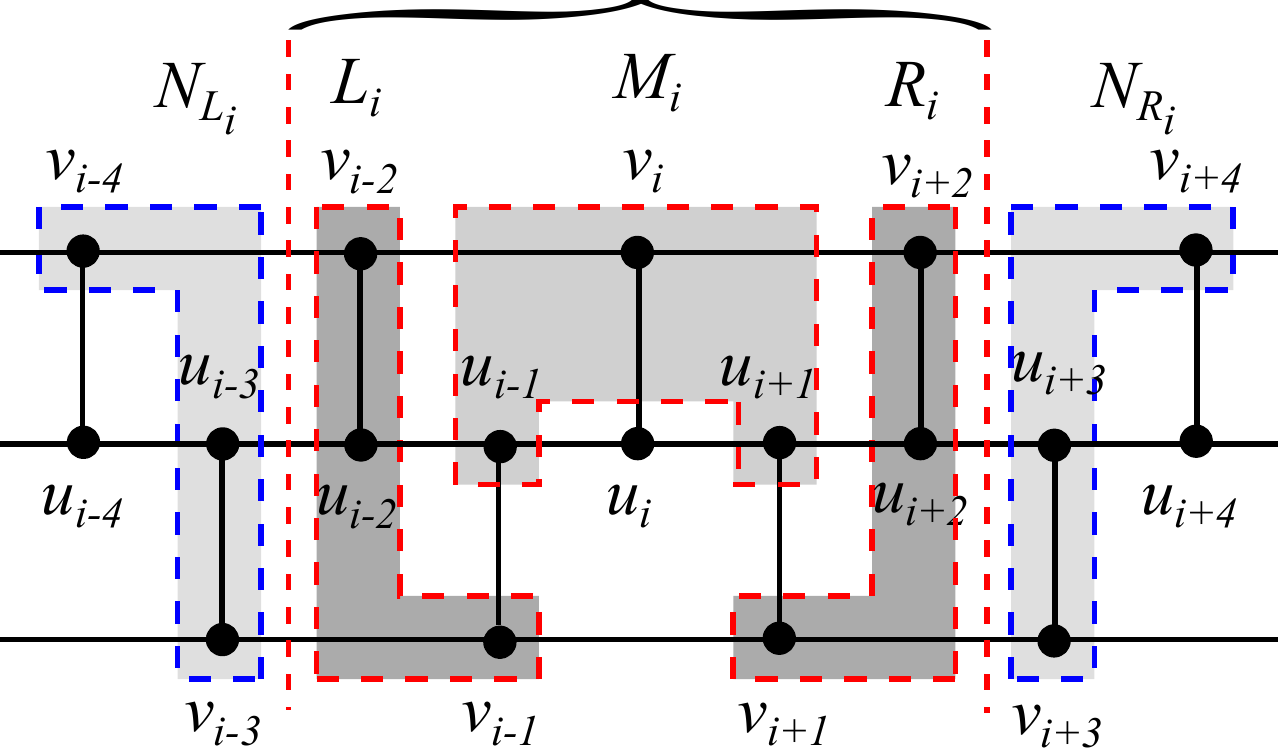}
}}
\put(160,103){$\mathcal{B}_i$}
\end{picture}
\caption{A block $\mathcal{B}_i$.}\label{fig1}
\end{center}
\end{figure}

\newpage

\begin{theorem}[\cite{Ebra09,Fu09,Yan09}]\label{pre:gammaPn2}
For $n \geqslant 3$, $\gamma(P(n, 2))= \left\lceil \frac{3n}{5}
\right\rceil$.
\end{theorem}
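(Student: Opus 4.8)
The plan is to prove the two inequalities $\gamma(P(n,2))\le \lceil 3n/5\rceil$ and $\gamma(P(n,2))\ge \lceil 3n/5\rceil$ separately. Since a dominating set has integer cardinality, the lower bound reduces to showing $|S|\ge 3n/5$ for every dominating set $S$, after which rounding up is automatic. The upper bound is the constructive half and is comparatively routine; essentially all of the difficulty is concentrated in the lower bound, where one must show that the ``obvious'' density $1/2$ can be pushed up to $3/5$.

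For the upper bound I would exhibit an explicit periodic dominating set. Using $N[u_i]=\{u_{i-1},u_i,u_{i+1},v_i\}$ and $N[v_i]=\{u_i,v_{i-2},v_i,v_{i+2}\}$, one checks directly that the period-$5$ pattern $\{u_{5j},v_{5j+2},v_{5j+3}\}$ dominates the ten vertices of each block of five consecutive columns: $u_{5j}$ covers $u_{5j-1},u_{5j},u_{5j+1}$ and its spoke $v_{5j}$, while $v_{5j+2}$ and $v_{5j+3}$ cover their own spokes $u_{5j+2},u_{5j+3}$ and, through the $\pm 2$ inner edges, the inner vertices $v_{5j+1},v_{5j+4}$, the last outer vertex $u_{5j+4}$ being covered by $u_{5(j+1)}$. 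When $5\mid n$ this yields a dominating set of size exactly $3n/5$. For $n=5k+r$ with $r\in\{1,2,3,4\}$ I would keep the pattern on all but a bounded block of columns near one seam and repair the seam by hand, adding exactly $\lceil 3r/5\rceil$ vertices, so that the total is $3k+\lceil 3r/5\rceil=\lceil 3n/5\rceil$ in every residue class.

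For the lower bound the starting point is a double-counting identity: each vertex of $P(n,2)$ lies in exactly five of the blocks $\mathcal{B}_1,\dots,\mathcal{B}_n$, so $\sum_{i=1}^n\gamma_i(S)=5|S|$, and it suffices to prove $\sum_{i=1}^n\gamma_i(S)\ge 3n$. A purely local count does not suffice: writing $S=S_u\cup S_v$ for the outer and inner parts with $a=|S_u|$ and $b=|S_v|$, the facts that $S_u$ dominates at most $3a$ outer vertices along the outer cycle and $S_v$ at most $3b$ inner vertices along the inner $\pm2$-cycle, together with the spokes forced to dominate the leftovers, give only $3a+b\ge n$ and $a+3b\ge n$, hence the weak bound $|S|\ge n/2$. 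The real content is that these two estimates cannot be simultaneously tight, i.e.\ the forced overlaps among outer coverage, inner coverage, and spokes cost an extra $n/10$. I would capture this with a discharging argument over the blocks whose crux is a sliding-window inequality, valid at every cyclic position, proved by exhausting the finitely many configurations of $S$ on a window of a few consecutive columns; equivalently one encodes the domination constraints as a finite automaton on columns and shows that its minimum mean cycle weight equals $3/5$.

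The main obstacle is exactly this last step, the case analysis that upgrades $n/2$ to $3n/5$. The configurations of $S$ on a window must be organized so that every window deficient in dominators is provably compensated by a neighboring window with a surplus, with bookkeeping that is uniform in the cyclic index and respects the wrap-around. Once the window inequality is in place, summing it around the cycle gives $\sum_i\gamma_i(S)\ge 3n$, hence $|S|\ge 3n/5$, and integrality upgrades this to $|S|\ge\lceil 3n/5\rceil$; combined with the construction this yields $\gamma(P(n,2))=\lceil 3n/5\rceil$. I would finish by checking the small residual cases (small $n$, and the behaviour of each residue of $n$ modulo $5$ at the seam) directly, to confirm that the ceiling is attained and never exceeded.
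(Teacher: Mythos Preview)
The paper does not prove this theorem at all: it is stated as a cited result from \cite{Ebra09,Fu09,Yan09} and is used as a black box throughout. So there is no ``paper's own proof'' to compare against.

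That said, your sketch is reasonable and in fact mirrors the block-counting machinery the paper later deploys for its own results on $P_f(n,2)$: the identity $\sum_i\gamma_i(S)=5|S|$ is exactly the engine used in Lemmas~\ref{5nn3}--\ref{5kn+1+2}, and the idea that a deficient block (one with $\gamma_i=2$) must be compensated by a neighboring block with $\gamma_i\ge 4$ is precisely the content of Lemmas~\ref{lm:gammageq3} and~\ref{lm:j-kneq4}. Your periodic construction for the upper bound is also essentially the ``self-contained block'' of Definition~\ref{repeatedpatten}. Where your proposal remains a sketch is the case analysis you allude to at the end: the cited papers (particularly \cite{Ebra09}) do carry out exactly this kind of window-by-window configuration analysis, and it is not short. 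Your framing via a minimum-mean-cycle or discharging argument is a legitimate reformulation, but the finitely many local configurations still have to be enumerated and checked, and that enumeration is the actual proof. As written, your proposal correctly identifies the strategy and the main obstacle but does not execute the decisive step.
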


\begin{corollary}\label{coro:gammaPn2}
For $n \geqslant 3$, $\left\lceil \frac{3n}{5}
\right\rceil-1\leq\gamma(P_f(n, 2))\leq \left\lceil \frac{3n}{5}
\right\rceil$.
\end{corollary}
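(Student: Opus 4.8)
The plan is to sandwich $\gamma(P_f(n,2))$ between two easy estimates. For the upper bound $\gamma(P_f(n,2))\leqslant\lceil 3n/5\rceil$, observe that adding the faulty vertex $u_f$ back to any dominating set of $P_f(n,2)$ cannot hurt: more precisely, I would take a $\gamma(P(n,2))$-set $D$ of $P(n,2)$ (which exists by Theorem~\ref{pre:gammaPn2} and has size $\lceil 3n/5\rceil$), and argue that $D$, or a small modification of it of the same size, dominates $P_f(n,2)$. Indeed, $P_f(n,2)$ is a subgraph of $P(n,2)$ on the vertex set $V(P(n,2))\setminus\{u_f\}$, so any set $S\subseteq V(P(n,2))\setminus\{u_f\}$ that dominates $V(P(n,2))\setminus\{u_f\}$ in $P(n,2)$ also dominates $P_f(n,2)$. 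If $u_f\notin D$ we are done immediately. If $u_f\in D$, then $D\setminus\{u_f\}$ still dominates every vertex of $P_f(n,2)$ except possibly the (at most three) neighbors of $u_f$ in $P_f(n,2)$; replacing $u_f$ by one of its neighbors, say $v_f$, re-dominates $v_f$ and $u_{f\pm1}$, so $(D\setminus\{u_f\})\cup\{v_f\}$ is a dominating set of $P_f(n,2)$ of size $\lceil 3n/5\rceil$. This gives the right-hand inequality.

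For the lower bound $\gamma(P_f(n,2))\geqslant\lceil 3n/5\rceil-1$, I would argue contrapositively: a dominating set $S$ of $P_f(n,2)$ with $|S|\leqslant\lceil 3n/5\rceil-2$ would yield, after adding $u_f$, a set $S\cup\{u_f\}$ of size at most $\lceil 3n/5\rceil-1$ which dominates all of $P(n,2)$. This is because $S$ dominates every vertex of $V(P(n,2))\setminus\{u_f\}$ in the graph $P_f(n,2)$, hence also in the supergraph $P(n,2)$, and adding $u_f$ itself to the set covers the only remaining vertex $u_f$. Such a set contradicts $\gamma(P(n,2))=\lceil 3n/5\rceil$ from Theorem~\ref{pre:gammaPn2}. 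Hence $\gamma(P_f(n,2))\geqslant\lceil 3n/5\rceil-1$.

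Both directions are short and essentially formal, relying only on the subgraph/supergraph relationship between $P_f(n,2)$ and $P(n,2)$ together with Theorem~\ref{pre:gammaPn2}; there is no real obstacle here. The only point requiring a moment's care is the $u_f\in D$ case in the upper bound, where one must check that a single well-chosen neighbor substitution preserves domination of $P_f(n,2)$ — but since $u_f$ has at most three neighbors in $P_f(n,2)$ (namely among $u_{f-1},u_{f+1},v_f$) and $v_f$ is adjacent to all of $u_{f-1}$... actually to $u_{f+1}$ only via $u_f$, so one instead notes $u_{f-1}$ and $u_{f+1}$ are themselves dominated by other members of $D$ or are handled by picking the neighbor $v_f$ plus at worst leaving the argument to the detailed case analysis in later sections. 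The genuine content of the paper — pinning down exactly which residue classes of $n$ achieve the lower value $\lceil 3n/5\rceil-1$ — lies entirely beyond this corollary and is deferred to Sections~\ref{faulty Petersen} and~\ref{main results}.
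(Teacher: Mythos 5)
Your lower-bound half is exactly the paper's argument and is fine: a dominating set $S$ of $P_f(n,2)$ with $|S|\leqslant\lceil 3n/5\rceil-2$ would give the dominating set $S+u_f$ of $P(n,2)$ of size $\lceil 3n/5\rceil-1$, contradicting Theorem~\ref{pre:gammaPn2}.

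The upper-bound half has a genuine gap in the case $u_f\in D$. You propose the swap $(D-u_f)+v_f$, but $v_f$ is adjacent only to $u_f$, $v_{f-2}$ and $v_{f+2}$; it is \emph{not} adjacent to $u_{f-1}$ or $u_{f+1}$. If one of $u_{f-1},u_{f+1}$ is dominated in $D$ only by $u_f$, the substituted set fails to dominate it, and no single-vertex swap is guaranteed to succeed, since the only common neighbour of $u_{f-1}$, $u_{f+1}$ and $v_f$ is $u_f$ itself. Your own write-up notices this and trails off with ``leaving the argument to the detailed case analysis in later sections,'' which is not a proof. The paper closes this case by symmetry rather than by a swap: since $\lceil 3n/5\rceil\leqslant n-1$ for $n\geqslant 3$, the set $D$ misses at least one outer vertex $u_j$; the rotations $i\mapsto i+t$ are automorphisms of $P(n,2)$, so $P(n,2)-u_j\cong P(n,2)-u_f$, and relabelling (equivalently, rotating $D$) yields a dominating set of size $\lceil 3n/5\rceil$ of $P(n,2)$ avoiding the faulty vertex, hence a dominating set of $P_f(n,2)$. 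Replacing your substitution step by this relabelling argument repairs the proof; the rest of your proposal can stand as is.
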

\begin{proof} Let $S$ be a $\gamma(P(n,2))$-set. If $u_f\notin S$, then $S$ is also a dominating set of
$P_f(n,2)$ and $\gamma(P_f(n, 2))\leq \left\lceil \frac{3n}{5}
\right\rceil$. For the case where $u_f\in S$, since $\gamma(P_f(n,
2))\leq \left\lceil \frac{3n}{5} \right\rceil$, by symmetry, we can
relabel the subscripts of the vertices in $P(n,2)$ but not $S$ such
that $u_f\notin S$. Thus, in this case, $\gamma(P_f(n, 2))\leq
\left\lceil \frac{3n}{5} \right\rceil$.

To prove that $\left\lceil \frac{3n}{5}
\right\rceil-1\leq\gamma(P_f(n, 2))$, suppose to the contrary that
there exists a dominating set $S$ of $P_f(n, 2)$ with
$|S|=\left\lceil \frac{3n}{5} \right\rceil-2$. It is clear that
$S\cup \{u_f\}$ is also a dominating set of $P(n,2)$ whose
cardinality is $\left\lceil \frac{3n}{5} \right\rceil-1$, a
contradiction. This completes the proof. \qed\end{proof}

\begin{lemma}\label{P1}
 Let $S$ be a minimum dominating set in $P_f(n,2)$ and assume that
 the vertex $u_f$ in the corresponding graph $P(n,2)$ has
 at least one neighbor in $S$. Then $|S|= \left\lceil \frac{3n}{5}
\right\rceil$.
\end{lemma}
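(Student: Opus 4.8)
The plan is to show that if $u_f$ has a neighbor in $S$, then $S$ (together with $u_f$ itself) is in fact a dominating set of the whole graph $P(n,2)$, so that $|S| \ge \gamma(P(n,2)) = \lceil 3n/5 \rceil$; combined with Corollary~\ref{coro:gammaPn2} this forces $|S| = \lceil 3n/5 \rceil$. The only vertex of $P(n,2)$ that $S$ could conceivably fail to dominate is $u_f$, since every other vertex of $P(n,2)$ is also a vertex of $P_f(n,2)$ and hence is dominated by $S$. But by hypothesis $u_f$ has a neighbor $w \in S$, and $w$ is still a vertex of $P_f(n,2)$, so $u_f \in N[w] \subseteq N[S]$ in $P(n,2)$. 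Thus $S$ dominates $P(n,2)$, giving $|S| \ge \lceil 3n/5 \rceil$.

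For the matching upper bound, note that $S$ being a \emph{minimum} dominating set of $P_f(n,2)$ means $|S| = \gamma(P_f(n,2)) \le \lceil 3n/5 \rceil$ by the right-hand inequality of Corollary~\ref{coro:gammaPn2}. Combining the two bounds yields $|S| = \lceil 3n/5 \rceil$, as claimed.

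I would carry this out in three short steps: first, observe that $V(P_f(n,2)) = V(P(n,2)) \setminus \{u_f\}$ and that adjacency is inherited, so $N[S]$ computed in $P_f(n,2)$ contains every vertex except possibly $u_f$; second, invoke the hypothesis to get a neighbor of $u_f$ in $S$ and conclude that $u_f$ is dominated in $P(n,2)$; third, apply Theorem~\ref{pre:gammaPn2} and Corollary~\ref{coro:gammaPn2} to pin down $|S|$ exactly. I do not anticipate a genuine obstacle here — the lemma is essentially a one-line observation dressed up with the standard bounds — the only point requiring a moment's care is making explicit that no vertex other than $u_f$ can be left undominated, which is immediate from the fact that removing a single vertex cannot remove a neighbor that $S$ was already using to dominate some third vertex.
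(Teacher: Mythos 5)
Your proposal is correct and follows essentially the same route as the paper: both arguments observe that $N(u_f)\cap S\neq\varnothing$ makes $S$ a dominating set of $P(n,2)$, giving $|S|\geq\gamma(P(n,2))=\left\lceil\frac{3n}{5}\right\rceil$, and then pair this with the upper bound $\gamma(P_f(n,2))\leq\left\lceil\frac{3n}{5}\right\rceil$ (you cite Corollary~\ref{coro:gammaPn2} directly, while the paper re-derives it via a $\gamma(P(n,2))$-set $T$ avoiding $u_f$, which is the same content). No gaps.
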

\begin{proof}  Since $N(u_f) \cap S\neq \es$, $S$ is also a dominating set of
$P(n,2)$. This implies that $\gamma(P(n, 2))\leqslant |S|$. By
Theorem~\ref{pre:gammaPn2}, there exists a $\gamma(P(n, 2))$-set,
say $T$, with $u_f\notin T$. Clearly, $T$ is also a dominating set
of $P_f(n, 2)$. Thus $|S| \leqslant \gamma(P(n, 2))$. This further
implies that $|S| =\gamma(P(n, 2))$. By Theorem~\ref{pre:gammaPn2},
the lemma follows. \qed\end{proof}

\section{Some properties when there is a faulty vertex}
\label{faulty Petersen}

In this section, we introduce some properties of $\mathcal{B}_f$ in
$P(n,2)$, where $u_f$ is a faulty vertex. By Lemma~\ref{P1}, it
remains to consider the case where $N(u_f) \cap S= \es$. Thus, in
the rest of this paper, we assume that $S$ is a minimum dominating
set under the condition that $N(u_f) \cap S= \es$ unless stated
otherwise. Thus, in this case, $M_f\cap S= \es$ which implies
$\mathcal{B}_f\cap S\subseteq L_f\cup R_f$. Hereafter, when we say
that a vertex $x$ is dominated with respect to $S$, then $x$ is
either in $S$ or $x$ is adjacent to some vertex in $S$.

\begin{proposition}\label{pp:exchange}
Assume that $S$ is a dominating set of graph $G$ and $S'=S-x+y$,
where $x\in S$ and $y\notin S$. If all vertices in $N[x]$ are
dominated by $S'$, then $S'$ is also a dominating set of $G$ with
$|S|=|S'|$.
\end{proposition}

\begin{lemma}\label{lmm:P_i=3}
If there exists $u_i\notin S$ and $u_f\notin M_i$ for some
$1\leqslant i\leqslant n$, then $\gamma_i\geqslant 3$.
\end{lemma}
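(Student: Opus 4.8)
The plan is to split $\mathcal{B}_i\setminus\{u_i\}$ into three triples, each of which is forced to meet $S$. The key structural fact is that in $P(n,2)$ the closed neighborhoods
\[
N[u_{i-1}]=\{u_{i-2},u_{i-1},u_i,v_{i-1}\},\quad
N[u_{i+1}]=\{u_i,u_{i+1},u_{i+2},v_{i+1}\},\quad
N[v_i]=\{u_i,v_{i-2},v_i,v_{i+2}\}
\]
intersect pairwise in exactly $\{u_i\}$; the description of $N[v_i]$ uses that the inner neighbors of $v_i$ are $v_{i-2}$ and $v_{i+2}$ because $k=2$. Hence the triples
\[
T_L=\{u_{i-2},u_{i-1},v_{i-1}\},\qquad
T_M=\{v_{i-2},v_i,v_{i+2}\},\qquad
T_R=\{u_{i+1},u_{i+2},v_{i+1}\}
\]
are pairwise disjoint and $T_L\cup T_M\cup T_R=\mathcal{B}_i\setminus\{u_i\}$.

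First I would use $u_i\notin S$ to write $\gamma_i=|\mathcal{B}_i\cap S|=|(\mathcal{B}_i\setminus\{u_i\})\cap S|=|T_L\cap S|+|T_M\cap S|+|T_R\cap S|$, so it suffices to show that each of $T_L,T_M,T_R$ meets $S$. For $T_M$: the faulty vertex is an outer vertex $u_f$, so $v_i$ is not faulty and is therefore dominated by $S$; since $u_i\notin S$, a dominator of $v_i$ must lie in $N[v_i]\setminus\{u_i\}=T_M$, whence $T_M\cap S\neq\es$. For $T_L$ and $T_R$ I would invoke the hypothesis $u_f\notin M_i$: since $M_i=\{u_{i-1},v_i,u_{i+1}\}$, this gives $f\neq i-1$ and $f\neq i+1$, so $u_{i-1}$ and $u_{i+1}$ are not faulty and are dominated by $S$; again using $u_i\notin S$, a dominator of $u_{i-1}$ lies in $N[u_{i-1}]\setminus\{u_i\}=T_L$ and a dominator of $u_{i+1}$ lies in $N[u_{i+1}]\setminus\{u_i\}=T_R$. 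Adding the three contributions gives $\gamma_i\geqslant 3$.

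I do not expect a genuine obstacle here: the entire content is the disjointness bookkeeping above. The only point that needs care is recognizing that the hypothesis $u_f\notin M_i$, together with the fact that no $v$-vertex is ever faulty, is precisely what forces each of $u_{i-1},u_{i+1},v_i$ to be a vertex that must be dominated by $S$ — which is exactly what makes $T_L,T_M,T_R$ each meet $S$. Note that minimality of $S$ is not needed; only that $S$ dominates $P_f(n,2)$.
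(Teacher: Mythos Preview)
Your proof is correct and follows essentially the same approach as the paper: the paper observes that $N[M_i]=\mathcal{B}_i$ and that any two vertices of $M_i=\{u_{i-1},v_i,u_{i+1}\}$ share only $u_i$ as a common neighbor, which is exactly your disjointness of $T_L,T_M,T_R$ stated in different language. Your version is more explicit about the partition, but the argument is the same; note also that the hypothesis $u_f\notin M_i$ already gives $u_f\neq v_i$, so the appeal to ``the faulty vertex is an outer vertex'' is not strictly needed.
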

\begin{proof} Since $N[M_i]=\mathcal{B}_i$, the vertices in $M_i$ can
only be dominated by some vertices in $\mathcal{B}_i$. Note that any
two vertices in $M_i$ have no neighbor in common except $u_i$.
However, $u_i\notin S$ and $u_f\notin M_i$. This results in
$|N[M_i]\cap S|\geqslant 3$. Thus $\gamma_i\geqslant 3$ and the
lemma follows. \qed\end{proof}

\begin{lemma}\label{lmm:P_j>1}
Assume that there exists a minimum dominating set $S$
 such that $N(u_f) \cap S =\es$.
 Then there exists a minimum dominating set $S$ such that
 $N(u_f) \cap S = \es$ and $\gamma_i \geq 2$ for all $1 \leq i \leq n$.
\end{lemma}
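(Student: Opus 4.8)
The plan is to start from a minimum dominating set $S$ with $N(u_f)\cap S=\es$ and show that whenever some block $\mathcal{B}_i$ satisfies $\gamma_i(S)\leq 1$, we can modify $S$ — without increasing its size and without creating a neighbor of $u_f$ in it — so as to repair that block. The key observation is that a block $\mathcal{B}_i$ with $\gamma_i\leq 1$ is extremely constrained: the three vertices of $M_i=\{u_{i-1},v_i,u_{i+1}\}$ pairwise share no neighbor except $u_i$, so by the argument of Lemma~\ref{lmm:P_i=3}, if $u_i\notin S$ and $u_f\notin M_i$ then $\gamma_i\geq 3$; hence a block with $\gamma_i\leq 1$ forces either $u_i\in S$ or $u_f\in M_i$. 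I would split into these two cases.

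First I would handle the case $u_f\in M_i$ (equivalently $i\in\{f-1,f,f+1\}$). Since $M_f\cap S=\es$, the block $\mathcal{B}_f$ itself has $\gamma_f=|\mathcal{B}_f\cap S|$ drawn entirely from $L_f\cup R_f$, and the blocks $\mathcal{B}_{f\pm1}$ overlap $\mathcal{B}_f$ heavily; here I would argue directly that $\gamma_i\leq 1$ leaves some vertex of $\mathcal{B}_i\setminus\{u_f\}$ undominated unless it is dominated from outside, and then show that one can add the appropriate ``central-ish'' vertex of a neighboring block while deleting a now-redundant vertex, invoking Proposition~\ref{pp:exchange}. Second, and this is the main case, suppose $u_i\in S$ and $\gamma_i\leq 1$, so in fact $\mathcal{B}_i\cap S=\{u_i\}$. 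Then $u_{i-1}$ and $u_{i+1}$ are dominated, but $v_{i-1},v_{i+1}$ must be dominated from outside $\mathcal{B}_i$, which forces $v_{i-3}\in S$ (or $v_{i+1}$'s other neighbor $v_{i-1}$... ) — more precisely $v_{i-1}$ needs $v_{i-3}\in S$ and $v_{i+1}$ needs $v_{i+3}\in S$, and similarly $v_{i-2},u_{i-2},v_{i+2},u_{i+2}$ must all be dominated from the neighboring blocks. A counting of how $S$ must cover $\mathcal{B}_{i-1}\cup\mathcal{B}_i\cup\mathcal{B}_{i+1}$ then shows the local structure of $S$ is essentially unique up to the symmetry of $P(n,2)$, and in each configuration I can exhibit an explicit swap $S'=S-x+y$ (with $x=u_i$ typically, $y$ one of $u_{i+2}$, $u_{i-2}$, or a $v$-vertex) that raises $\gamma_i$ to at least $2$, keeps $|S'|=|S|$ by Proposition~\ref{pp:exchange}, and keeps $N(u_f)\cap S'=\es$ because the vertex $y$ introduced is never in $M_f$ (one checks $y\notin\{u_{f-1},v_f,u_{f+1}\}$ using that $i\notin F$ in this case).

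To turn these local repairs into a global statement I would use a potential/monovariant argument: repeatedly apply the swap to some block with $\gamma_i\leq 1$, choosing the modification so that the number of ``bad'' blocks $\{i:\gamma_i\leq 1\}$ strictly decreases (or, if that is hard to guarantee directly, so that the multiset $(\gamma_1,\dots,\gamma_n)$ strictly increases in a suitable lexicographic or summed sense while $|S|$ stays fixed — note $\sum_i\gamma_i=5|S|$ is constant, so one must instead track, say, $\sum_i\max(0,2-\gamma_i)$, which the repair strictly decreases). Since this quantity is a nonnegative integer, the process terminates, and the terminal set $S$ satisfies $\gamma_i\geq2$ for all $i$ while still being a minimum dominating set with $N(u_f)\cap S=\es$.

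I expect the main obstacle to be the case analysis in the second case: verifying that when $\mathcal{B}_i\cap S=\{u_i\}$ the forced coverage of $\mathcal{B}_{i-1}$ and $\mathcal{B}_{i+1}$ from outside really does pin down a finite list of local patterns, and that for each pattern the chosen swap $y$ is simultaneously (a) legal — all of $N[x]$ stays dominated — and (b) harmless to the faulty-vertex condition. A secondary subtlety is ensuring the monovariant genuinely decreases rather than merely not increasing; it may be cleanest to prove the stronger statement that a single swap can be chosen to eliminate a bad block without creating a new one, which requires checking that the repair of $\mathcal{B}_i$ does not drop $\gamma_{i\pm2}$ below $2$.
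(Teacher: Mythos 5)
Your overall plan --- iterated local exchanges justified by Proposition~\ref{pp:exchange}, driven by a potential that counts deficient blocks --- is exactly the strategy of the paper's proof, but the concrete repair you propose in the main case does not work. If $\mathcal{B}_i\cap S=\{u_i\}$, then no swap of the form $S'=S-u_i+y$ can succeed: after deleting $u_i$ the three vertices $u_{i-1},v_i,u_{i+1}$ must be re-dominated, their only common neighbour is $u_i$, and all of their other neighbours lie inside $\mathcal{B}_i$ and hence outside $S$, so a single added vertex $y$ cannot cover all three; moreover, if $y\in\mathcal{B}_i$ the swap does not even raise $\gamma_i$ above $1$. The move that works keeps $u_i$ and trades a vertex two blocks away: since $R_i\cap S=\es$, each vertex of $R_i$ must be dominated from $N^+(R_i)$, and since no two vertices of $R_i$ share a neighbour there, all of $N^+(R_i)=\{v_{i+3},u_{i+3},v_{i+4}\}$ is forced into $S$ (you derived the $v_{i\pm3}$ part but not this full forcing); then $S'=S-u_{i+3}+u_{i+2}$ is legitimate by Proposition~\ref{pp:exchange}, gives $\gamma_i(S')=2$, and lowers only $\gamma_{i+5}$, which was at least $3$ because $\mathcal{B}_{i+5}$ contains all of $N^+(R_i)$. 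That last observation is precisely what makes the number of bad blocks strictly decrease --- the monovariant point you flagged but left unresolved.

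Two further gaps. First, your handling of the faulty-vertex condition is unsound: in the case $u_i\in S$ you only know $i\notin\{f-1,f,f+1\}$ (because $u_f$ is deleted and $u_{f\pm1}\in N(u_f)$ cannot be in $S$), not $i\notin F$; the indices $i=f\pm2$ are not excluded, and for $i=f-2$ the right-hand repair would want to add the nonexistent vertex $u_f$, while for $i=f-3$ it would add $u_{f-1}\in N(u_f)$. So the check ``$y\notin M_f$ because $i\notin F$'' is not available; what is needed is the paper's normalization step: if $u_{i+2}$ or $u_{i+3}$ is $u_f$ (or the added vertex would land in $N(u_f)$), reverse $\mathcal{B}_i$ and use the mirror swap $S-u_{i-3}+u_{i-2}$. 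Second, the case $u_f\in M_i$ needs no repair at all: under $N(u_f)\cap S=\es$ we have $u_{f\pm1}\notin S$, and a disjoint-neighbourhood count as in Lemma~\ref{lmm:P_i=3}, carried out inside $\mathcal{B}_{f-1}$, $\mathcal{B}_f$, $\mathcal{B}_{f+1}$ with $u_f$ deleted, already gives $\gamma_i\geq2$ there (indeed $\gamma_f\geq3$); the exchange you sketch for that case is both unnecessary and, as written, unverified.
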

\begin{proof} If $\gamma_i > 1$ for
$1\leqslant i\leqslant n$ in $S$, then we are done. Thus we assume
that there exists $\gamma_i =1$ for some $i\neq f$. By
Lemma~\ref{lmm:P_i=3}, $u_i\in S$; otherwise, $\gamma_i \geqslant
2$. We may assume that none of $u_{i+2}$ and $u_{i+3}$ is $u_f$;
otherwise, reverse $\mathcal{B}_i$ so that $L_i$ and $R_i$ are
interchanged. This further implies that all vertices in $R_i$ must
be dominated by some vertices in $N^+(R_i)$. Since any two vertices
in $R_i$ have no common neighbor in $N^+(R_i)$, all vertices in
$N^+(R_i)$ must be in $S$ so that the vertices in $R_i$ can be
dominated. By Proposition~\ref{pp:exchange}, the set
$S'=S-u_{i+3}+u_{i+2}$ is also a minimum dominating set under the
condition that $N(u_f) \cap S= \es$ since the vertices in
$N[u_{i+3}]$ are still dominated by the vertices in $S'$. Note that
$\gamma_j(S')=\gamma_j(S)$ for $1\leqslant j\leqslant n$ except
$j\in\{i,i+5\}$. It is easy to verify that $\gamma_i(S')=2$ and
$\gamma_{i+5}(S')\geqslant 2$. Moreover, $S'$ has one less elements
in $\{j|\gamma_j(S')=1, 1\leqslant j\leqslant n\}$ than that of $S$.
By applying the above process repeatedly until the set
$\{j|\gamma_j(S')=1, 1\leqslant j\leqslant n\}$ becomes empty, this
results in a minimum dominating set with $\gamma_i \geqslant 2$ for
all $1\leqslant i\leqslant n$. This completes the proof.
\qed\end{proof}

\begin{definition}
\label{Type I set} A minimum dominating set $S$ is called a Type I
set if $\gamma_i \geqslant 2$ for $1\leqslant i\leqslant n$. For a
Type I set $S$, the cardinality of the set $\{i|\gamma_i(S)=2,
u_i\notin \mathcal{B}_f\}$ is called its couple number.
\end{definition}

\begin{proposition}\label{pp:L_iandR_i}
Assume that $S$ is a Type I set. If $\gamma_f = 3$ and $N(u_f) \cap
S= \es$, then either $|L_f\cap S|=1$ or $|R_f\cap S|=1$.
\end{proposition}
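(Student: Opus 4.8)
The plan is to exploit the structure forced by $\gamma_f = 3$ together with the constraint $\mathcal{B}_f \cap S \subseteq L_f \cup R_f$ (which follows from $M_f \cap S = \es$), and argue that the three vertices of $\mathcal{B}_f \cap S$ cannot be split as $2{+}1$ or $3{+}0$ across $L_f$ and $R_f$ — only the split $1{+}2$ (equivalently $|L_f \cap S| = 1$ or $|R_f \cap S| = 1$) survives, since $|L_f| = |R_f| = 3$. So the real content is to rule out $|L_f \cap S| = 0$ (and symmetrically $|R_f \cap S| = 0$); once that is done, the only remaining possibility for a $3{+}0$ or the partner side of a $2{+}1$ split has one of the sides equal to $L_f$ or $R_f$ entirely, and I must also show that side cannot be all of $L_f$ (or all of $R_f$), i.e. rule out $|L_f \cap S| = 3$.

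First I would handle $|L_f \cap S| = 0$. Then all three set-vertices of $\mathcal{B}_f$ lie in $R_f = \{v_{f+1}, u_{f+2}, v_{f+2}\}$, so $R_f \subseteq S$. Now consider the vertices of $L_f = \{v_{f-1}, u_{f-2}, v_{f-2}\}$: none of them is in $S$, and none is adjacent to $u_f$ (which is faulty and not in $S$ anyway) or to any vertex of $R_f$, so each vertex of $L_f$ must be dominated from outside $\mathcal{B}_f$, i.e. from $N^+(L_f) = \{v_{f-3}, u_{f-3}, v_{f-4}\}$ together with the edges internal to $L_f$. The key combinatorial observation — analogous to the one used in Lemma~\ref{lmm:P_j>1} for $R_i$ — is that $v_{f-1}$, $u_{f-2}$, $v_{f-2}$ have no common neighbor outside $\mathcal{B}_f$, and in fact $v_{f-1}$ can only be reached (from outside $\mathcal{B}_f$ or internally via $u_{f-1}\notin S$) through $v_{f-3}$, $u_{f-2}$ through $u_{f-3}$, and $v_{f-2}$ through $v_{f-4}$; since none of $v_{f-1}, u_{f-2}, v_{f-2}$ is in $S$, all three of $v_{f-3}, u_{f-3}, v_{f-4}$ must lie in $S$. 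That forces $\gamma_{f-2}(S)$ or an overlapping block to be large; more directly, I would use a counting/exchange argument to contradict minimality: the four vertices $R_f \cup \{v_{f-3}\}$ (or better, combine $u_{f+2}, v_{f+2}$ on the right with the forced left triple) overload a block, contradicting either $|S| = \lceil 3n/5 \rceil$ from Corollary~\ref{coro:gammaPn2} or producing a block with $\gamma_i$ too small elsewhere.

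Next I would rule out $|L_f \cap S| = 3$, i.e. $L_f \subseteq S$ and $R_f \cap S = \es$. This is the mirror image of the previous case (apply the argument with $L$ and $R$ swapped, which is legitimate since the block $\mathcal{B}_f$ is symmetric under reversal), so the same contradiction applies: $R_f \cap S = \es$ forces $N^+(R_f) = \{v_{f+3}, u_{f+3}, v_{f+4}\} \subseteq S$, again overloading. Combining the two, among the three set-vertices of $\mathcal{B}_f$ we cannot have all three on one side, so the split is $2{+}1$ or $1{+}2$; either way $\min(|L_f \cap S|, |R_f \cap S|) = 1$, which is exactly the claim.

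The main obstacle I expect is making the "overloading" contradiction precise and clean. Having $R_f \subseteq S$ and $N^+(L_f) \subseteq S$ gives six specified vertices in $S$ spread over indices $f-4$ through $f+2$; I need to show this is incompatible with $S$ being a \emph{minimum} dominating set satisfying $N(u_f)\cap S = \es$. The cleanest route is probably to show it creates a block $\mathcal{B}_i$ (for a suitable $i$ away from $f$, e.g. $i = f+1$ or $i = f-1$) whose neighborhood structure, combined with $\gamma_i \ge 2$ from Lemma~\ref{lmm:P_j>1} and the global bound $|S| \le \lceil 3n/5 \rceil$, is impossible — or to directly build a smaller dominating set of $P(n,2)$ by deleting one of the redundant forced vertices (e.g. one can check $v_{f+1}$ becomes redundant once $u_{f-1}, v_f, u_{f+1}$ are covered elsewhere), contradicting Theorem~\ref{pre:gammaPn2}. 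I would verify the redundancy by the exchange Proposition~\ref{pp:exchange}: check that every vertex in the closed neighborhood of the deleted forced vertex is still dominated by the remaining forced vertices, which is a short finite case check.
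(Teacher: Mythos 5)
Your reduction of the statement to ruling out $L_f\cap S=\es$ and $R_f\cap S=\es$ is the same first step as the paper, but the way you try to rule these cases out has a genuine gap, and it also rests on a false structural claim. The paper's contradiction is immediate and needs no counting: if $L_f\cap S=\es$, then the vertex $u_{f-1}$ is simply not dominated, because its only neighbors are $u_{f-2}$, $v_{f-1}$ (both in $L_f$, hence not in $S$) and the deleted vertex $u_f$, while $u_{f-1}$ itself lies in $M_f=N(u_f)$, which is disjoint from $S$ by hypothesis; symmetrically $u_{f+1}$ is undominated when $R_f\cap S=\es$. You never examine the domination of the $M_f$ vertices at all, so you miss this one-line contradiction and are driven into an ``overloading'' argument that you yourself leave unproved (``the main obstacle I expect is making the overloading contradiction precise'').

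Moreover, the forcing step you do carry out is incorrect: it is not true that no vertex of $L_f$ is adjacent to a vertex of $R_f$, since $v_{f-1}v_{f+1}$ is an inner edge, so with $R_f\subseteq S$ the vertex $v_{f-1}$ is already dominated by $v_{f+1}$ and you can only force $u_{f-3},v_{f-4}\in S$, not all of $N^+(L_f)$. With only these two forced vertices the hoped-for overload (or the claimed redundancy of $v_{f+1}$, which can fail because $v_{f-1}$ may be dominated \emph{only} by $v_{f+1}$) does not obviously materialize, and no contradiction with minimality or with $|S|\leq\left\lceil\frac{3n}{5}\right\rceil$ is actually established. So the proposal as written does not prove the proposition; the fix is to check the vertices of $M_f$, where the contradiction is immediate and independent of minimality.
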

\begin{proof} Since $\gamma_f = 3$ and $N(u_f) \cap
S= \es$, $\gamma_f=|L_f\cap S|+|R_f\cap S|=3$. To show that either
$|L_f\cap S|=1$ or $|R_f\cap S|=1$, it is equivalent to showing that
$|L_f\cap S|=0$ or $|R_f\cap S|=0$ is impossible. Suppose to the
contrary that $L_f\cap S= \es$ (or $R_f\cap S= \es$). It can be
found that vertex $u_{f-1}$ (or $u_{f+1}$) is not dominated, a
contradiction. \qed\end{proof}

By Proposition~\ref{pp:L_iandR_i}, in the rest of this paper, we
only consider the case where $|L_f\cap S|=1$ and $|R_f\cap S|=2$
when $S$ is a Type I set with $\gamma_f=3$. For the case where
$|L_f\cap S|=2$ and $|R_f\cap S|=1$, we can reverse the generalized
Petersen graph so that it yields $|L_f\cap S|=1$ and $|R_f\cap
S|=2$.

In total, there are nine possible cases for the vertices in
$\mathcal{B}_f\cap S$ when $N(u_f) \cap S= \es$, $\gamma_f=3$, and
$|L_f\cap S|=1$. However, only four of them are feasible (see
Figures~\ref{figB}(a)-(d)). For example, if $|L_f\cap S|=1$ and
$v_{f-2}\in S$, then $u_{f-1}$ is not dominated and it is an
infeasible case.

\begin{figure}[htb]
\begin{center}
\subfigure[]{
\includegraphics[scale=0.35]{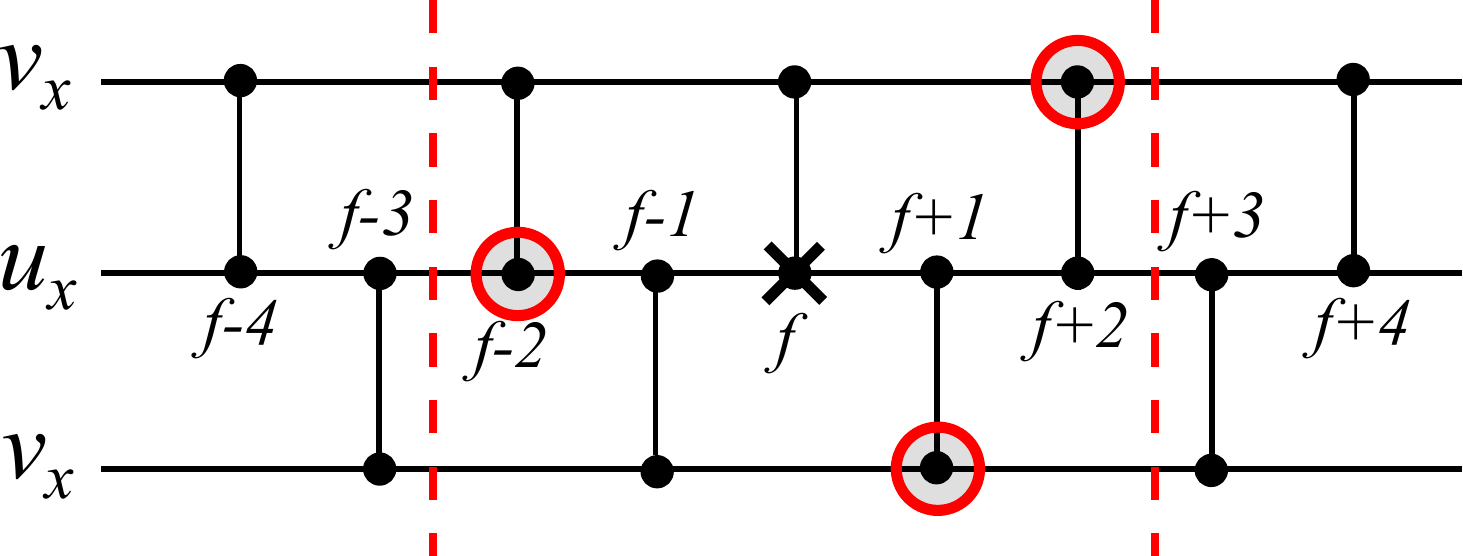}
} \quad \subfigure[]{
\includegraphics[scale=0.35]{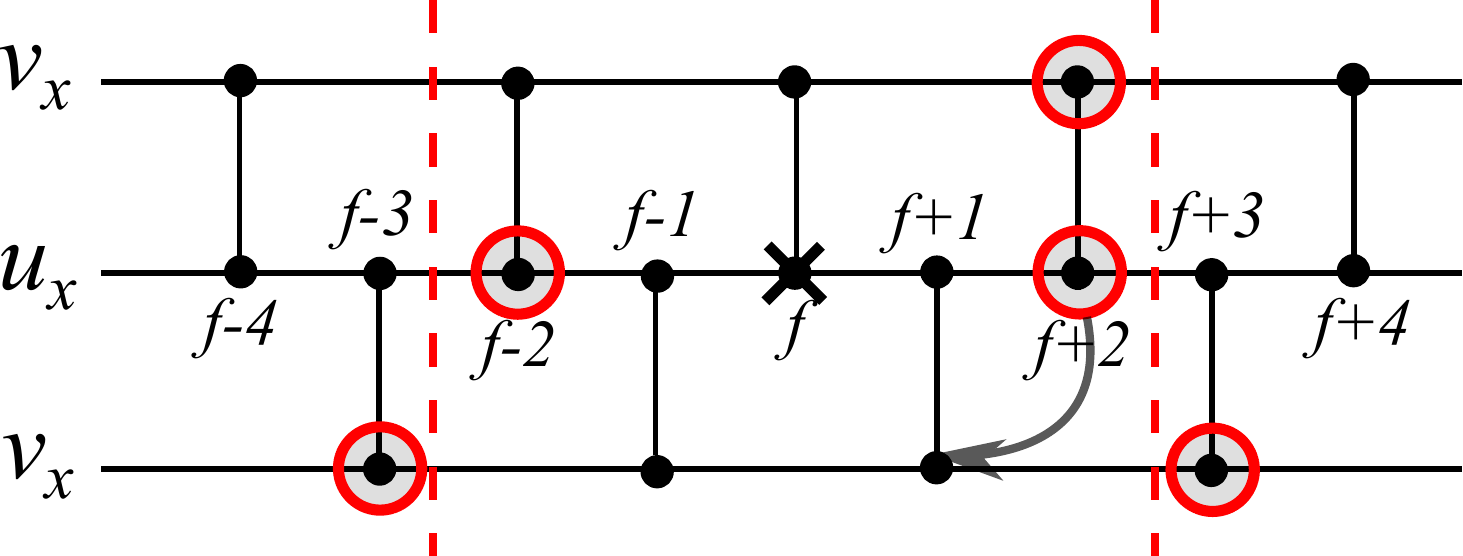}
} \quad \subfigure[]{
\includegraphics[scale=0.35]{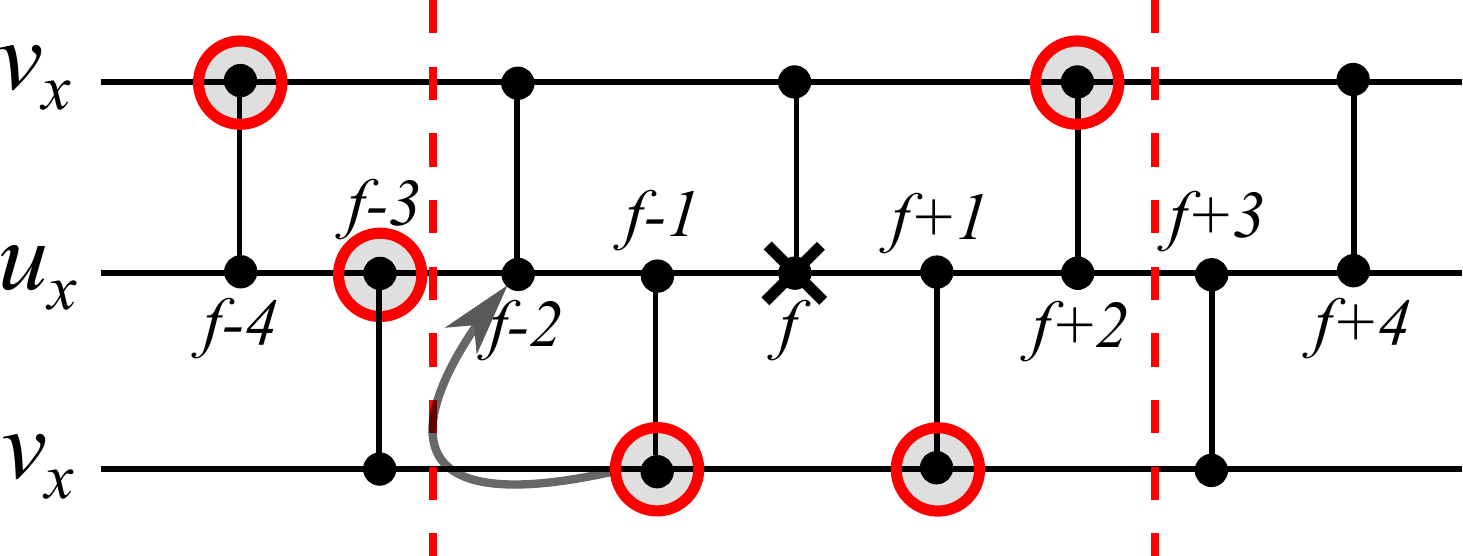}
} \quad \subfigure[]{
\includegraphics[scale=0.35]{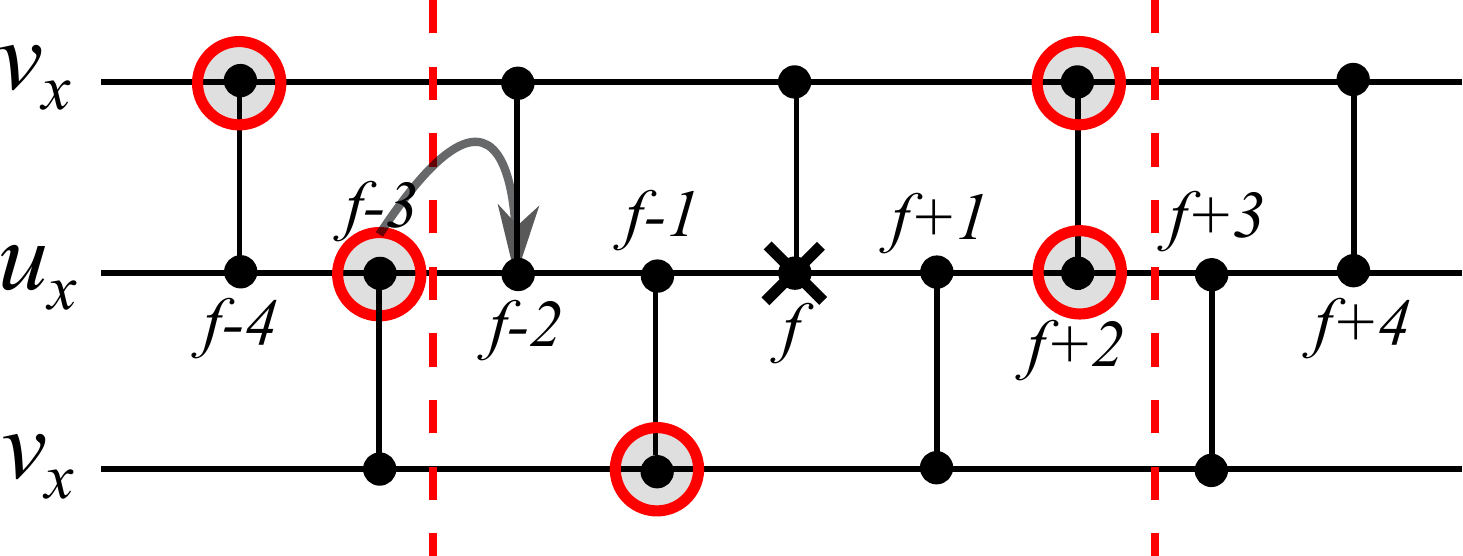}
}  \caption{\label{figB}All feasible $\mathcal{B}_f\cap S$ when $N(u_f)
\cap S= \es$, $\gamma_f=3$, and $|L_f\cap S|=1$.}
\end{center}
\end{figure}

For the case in Figure~\ref{figB}(b), by
Proposition~\ref{pp:exchange}, $S_1=S-u_{f+2}+v_{f+1}$ is still a
Type I set. Note that the pattern of $S_1\cap \mathcal{B}_f$ is
exactly the case in Figure~\ref{figB}(a). For the case in
Figure~\ref{figB}(c), by Proposition~\ref{pp:exchange},
$S_2=S-v_{f-1}+u_{f-2}$ is also a Type I set. Furthermore, the
pattern of $S_2\cap \mathcal{B}_f$ is also exactly the case in
Figure~\ref{figB}(a). For the case in Figure~\ref{figB}(d), by
Proposition~\ref{pp:exchange}, $S_3=S-u_{f-3}+u_{f-2}$ is also a
Type I set while $\gamma_f=4$. We shall define a Type III set later
for this case. Henceforth, if $S$ is a Type I set with $N(u_f) \cap
S= \es$ and $\gamma_f = 3$, then we may assume that it is a Type II
set which is defined as follows.

\begin{definition}
\label{Type II set} A Type I set $S$ with $\gamma_f=3$ is called a
Type II set if $\mathcal{B}_f\cap S=\{u_{f-2},v_{f+1},v_{f+2}\}$
$($see Figure~$\ref{figB}(\mathrm{a}))$.
\end{definition}

Now we consider the case where $N(u_f) \cap S= \es$ and $\gamma_f =
4$. By symmetry, we only need to consider the cases where $|L_f\cap
S|=1$ and $|L_f\cap S|=2$. There are only seven feasible
combinations (see Figures~\ref{fig8}(a)-\ref{fig8}(g)).  Every
minimum dominating set $S$ in the cases of
Figures~\ref{fig8}(e)-\ref{fig8}(g) can be transformed to a Type II
set. That is, set $S_1=S-u_{f+2}+u_{f+3}$ in Figure~\ref{fig8}(e),
set $S_2=S-v_{f-2}+v_{f-4}$ in Figure~\ref{fig8}(f), and set
$S_3=S-v_{f+1}+v_{f+3}$ in Figure~\ref{fig8}(g). Note that
$N[u_{f+2}], N[v_{f-2}]$, and $N[v_{f+1}]$ are still dominated by
the vertices in $S_1, S_2$, and $S_3$, respectively. Thus, by
Proposition~\ref{pp:exchange}, $S_1, S_2$, and $S_3$ are Type II
sets.

\begin{figure}[htb]
\begin{center}
\subfigure[]{
\includegraphics[scale=0.35]{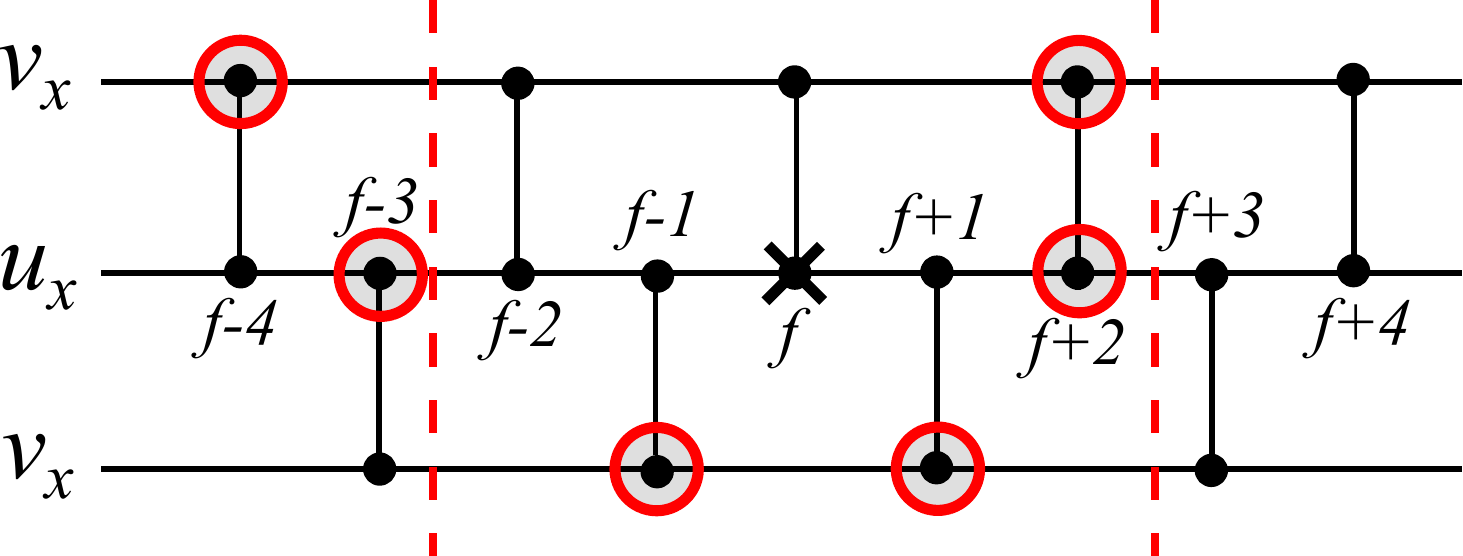}
} \quad \subfigure[]{
\includegraphics[scale=0.35]{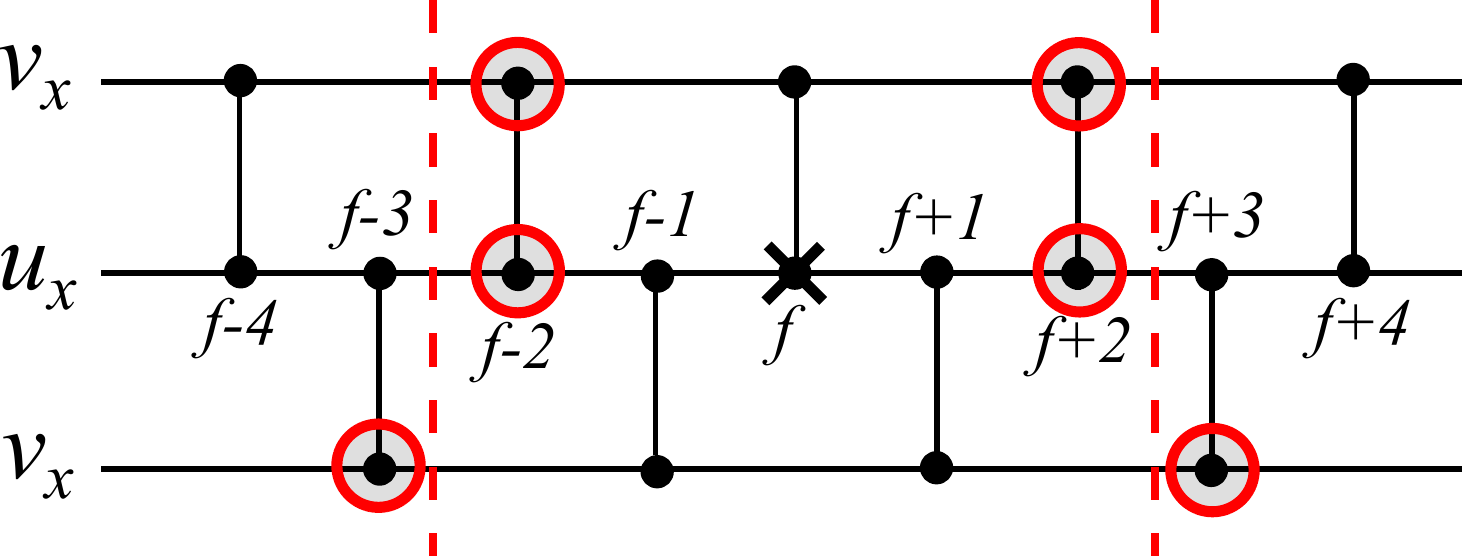}
} \quad \subfigure[]{
\includegraphics[scale=0.35]{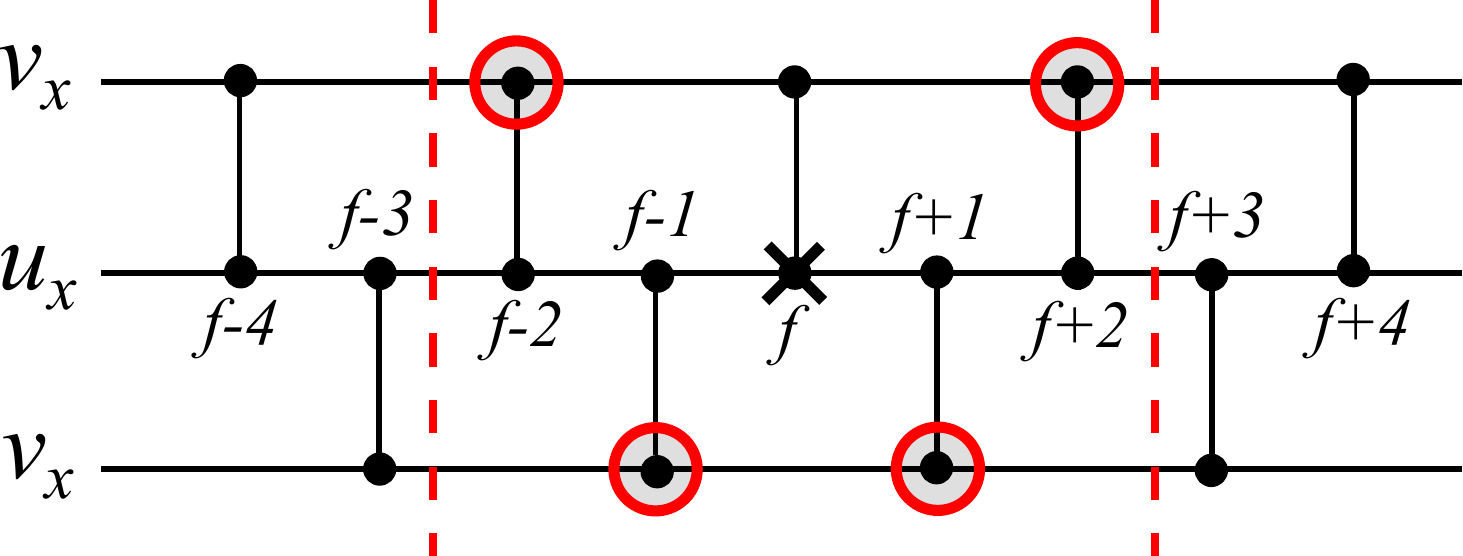}
} \quad \subfigure[]{
\includegraphics[scale=0.35]{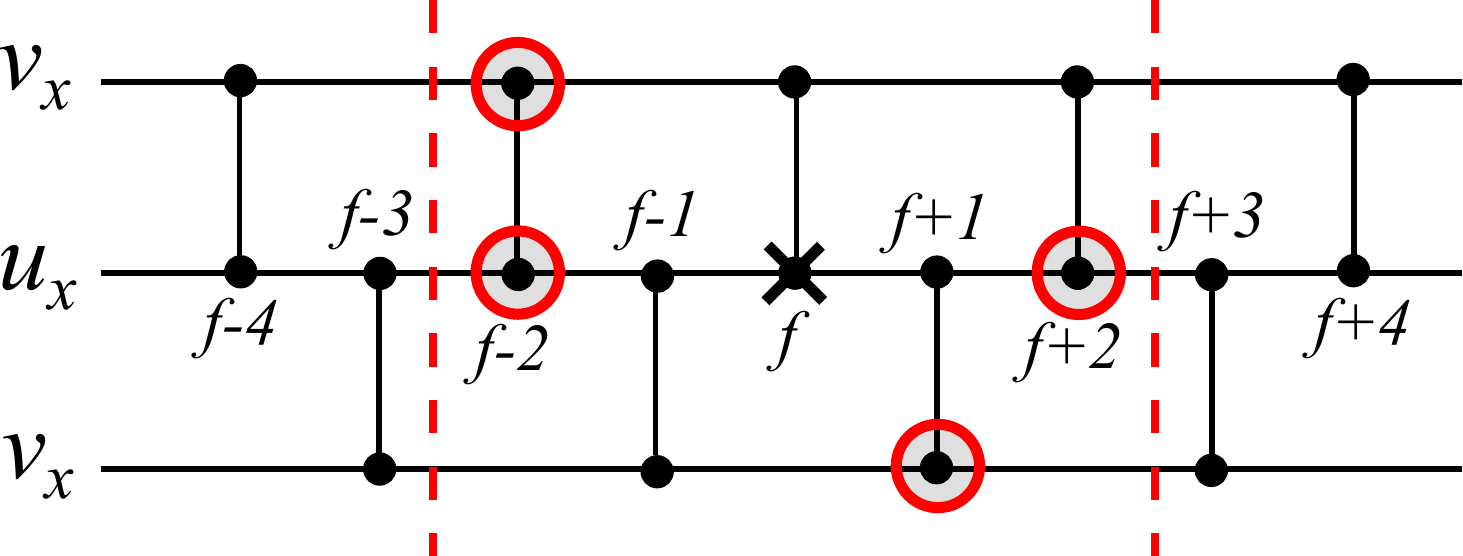}
} \quad \subfigure[]{
\includegraphics[scale=0.35]{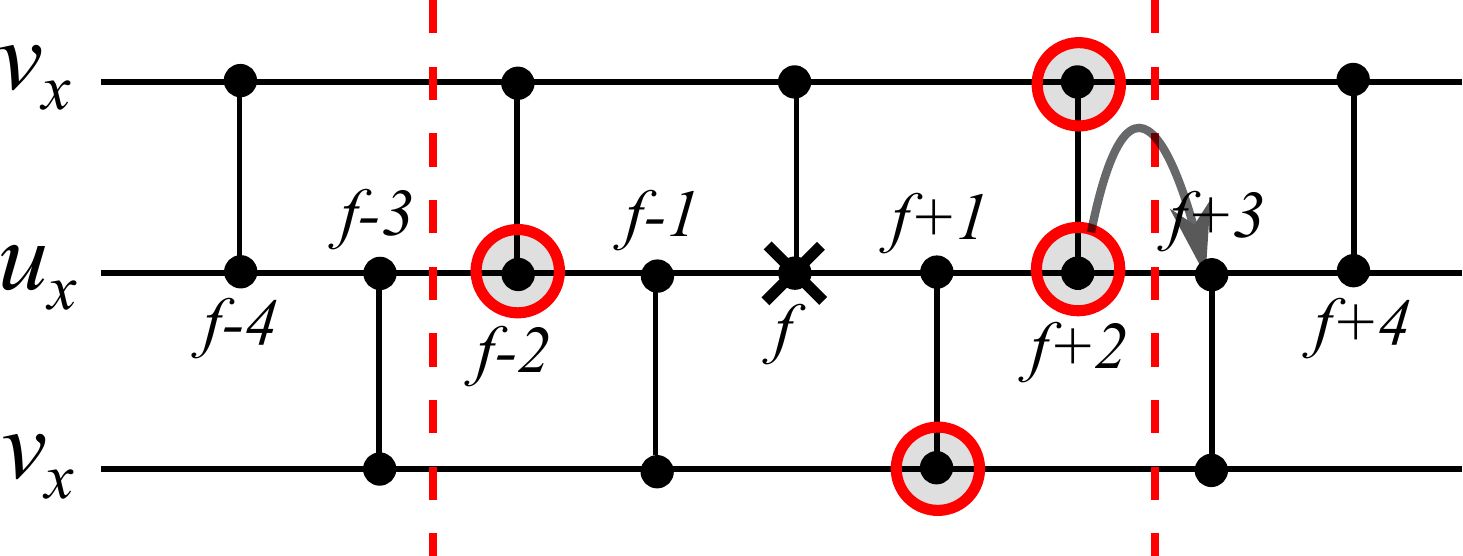}
} \quad \subfigure[]{
\includegraphics[scale=0.35]{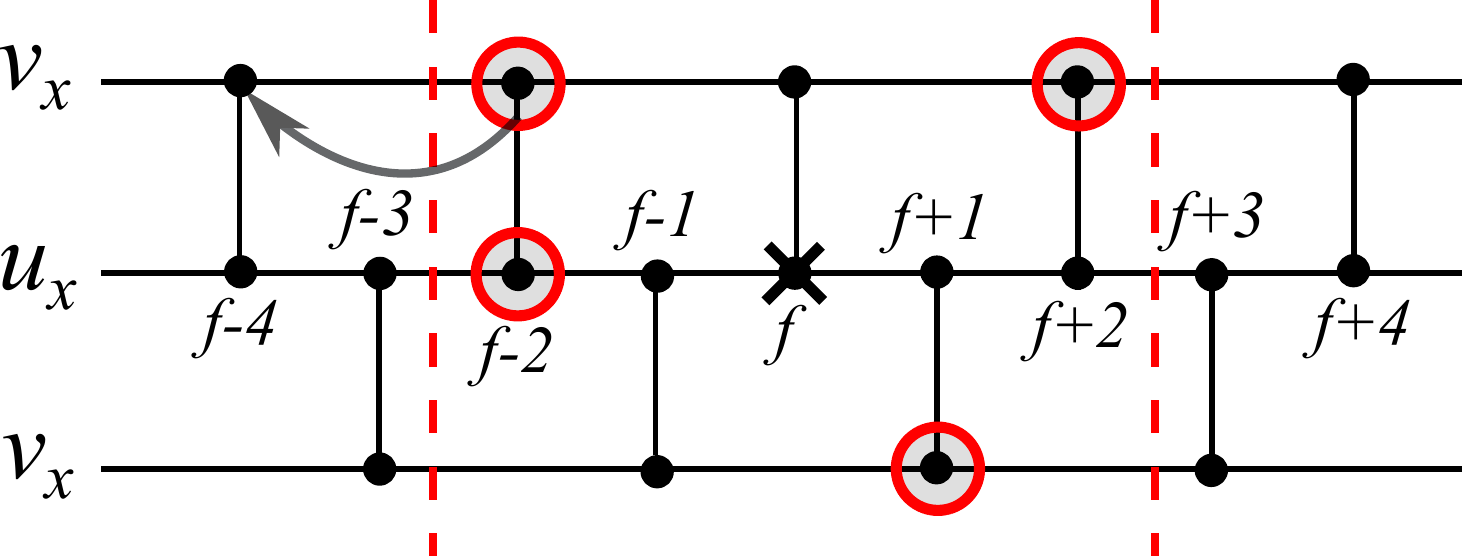}
} \quad \subfigure[]{
\includegraphics[scale=0.35]{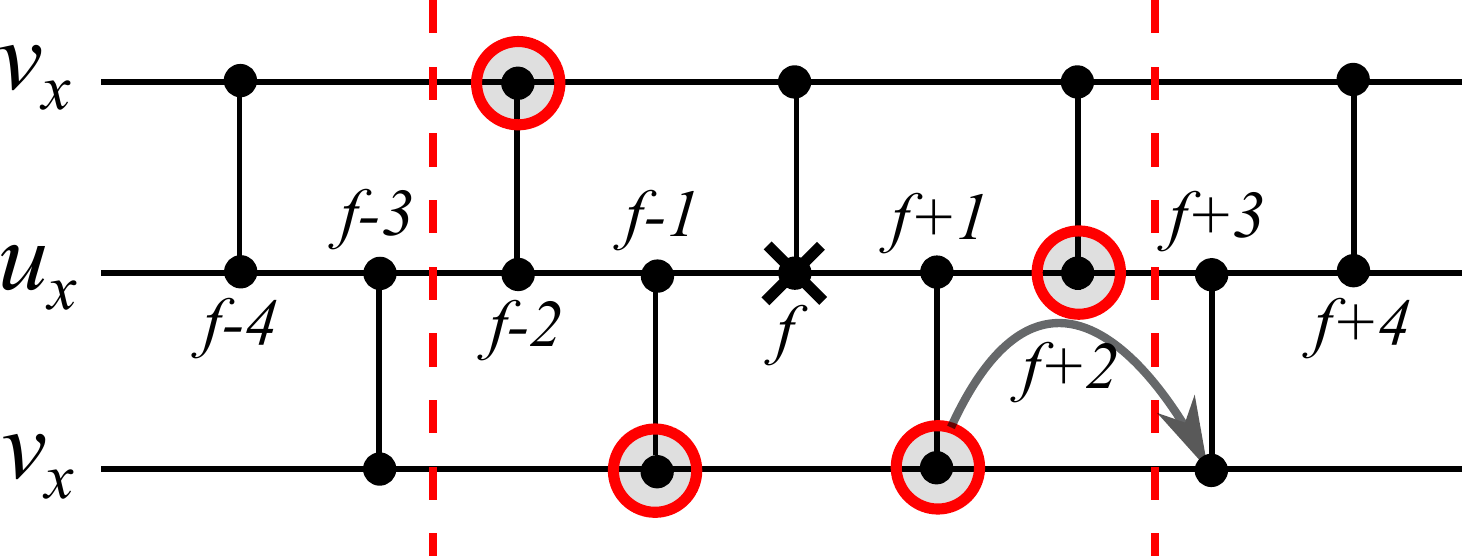}
} \caption{\label{fig8}All feasible $\mathcal{B}_f\cap S$ when $N(u_f)
\cap S= \es$ and $\gamma_f=4$.}
\end{center}
\end{figure}

\begin{definition}
\label{Type II set} A Type I set $S$ with $\gamma_f=4$ is called a
Type III set if $\mathcal{B}_f\cap S$ is equal to one of the
following four sets: $\{v_{f-1},v_{f+1},u_{f+2},v_{f+2}\}$,
$\{u_{f-2},v_{f-2},u_{f+2},v_{f+2}\}$,
$\{v_{f-2},v_{f-1},v_{f+1},v_{f+2}\}$, and
$\{u_{f-2},v_{f-2},v_{f+1},u_{f+2}\}$, $($see
Figures~$\ref{figB}(\mathrm{a})$-$\ref{figB}(\mathrm{d}))$, or
precisely, are called {\em Type III$(\mathrm{a})$-III$(\mathrm{d})$
sets}, respectively.
\end{definition}
Henceforth, if $S$ is a Type I set with $\gamma_f = 4$ and $N(u_f)
\cap S= \es$, then we may assume that it is a Type III set.

\begin{lemma}\label{lmm:gamma=2}
Assume that $S$ is a Type II $($or III$)$ set with couple number
$c$. If there exists $\gamma_i=2$ for $i\notin F$ and $S\cap
\{v_{i-1},v_i,v_{i+1}\}=\es$, then there exists a Type II $($or
III$)$ set with couple number $c-1$.
\end{lemma}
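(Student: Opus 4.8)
The plan is to raise $\gamma_i$ from $2$ to $3$ by a single exchange in the sense of Proposition~\ref{pp:exchange}, chosen so that $\mathcal{B}_f$ is left untouched and no block $\mathcal{B}_j$ with $j\notin F$ passes from $\gamma_j\ge 3$ to $\gamma_j=2$; then $S'$ is again a Type~II (resp.\ Type~III) set and its couple number is $c-1$. The first step is to fix the local picture around $\mathcal{B}_i$. Since $i\notin F$ we have $u_f\notin M_i$, so Lemma~\ref{lmm:P_i=3} forces $u_i\in S$; as $\gamma_i=2$ there is exactly one further vertex $w\in\mathcal{B}_i\cap S$, and the hypothesis $S\cap\{v_{i-1},v_i,v_{i+1}\}=\es$ leaves $w\in\{u_{i-1},u_{i-2},v_{i-2}\}\cup\{u_{i+1},u_{i+2},v_{i+2}\}$. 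If $w\in\{u_{i-1},u_{i-2},v_{i-2}\}$, then each vertex of $R_i$ has no dominator lying in $\mathcal{B}_i\cap S$, so $N^+(R_i)=\{v_{i+3},u_{i+3},v_{i+4}\}\subseteq S$; symmetrically, if $w\in\{u_{i+1},u_{i+2},v_{i+2}\}$ then $N^+(L_i)=\{v_{i-3},u_{i-3},v_{i-4}\}\subseteq S$. Assume we are in the first case, $N^+(R_i)\subseteq S$ — the second is its mirror image, which must be argued by hand because the reflection symmetry has already been used to normalize $\mathcal{B}_f$ to $|L_f\cap S|=1$.

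Now take $S'=S-u_{i+3}+v_{i+2}$. This is a legitimate exchange, since $v_{i+2}\in\mathcal{B}_i$ and $v_{i+2}\notin S$, and $N[u_{i+3}]=\{u_{i+3},u_{i+2},u_{i+4},v_{i+3}\}$ is dominated in $S'$ — by $v_{i+3},v_{i+2},v_{i+4},v_{i+3}$ respectively, all in $S'$ — so $S'$ is again a minimum dominating set by Proposition~\ref{pp:exchange}. As $u_{i+3}$ lies in $\mathcal{B}_j$ exactly for $j\in\{i+1,\dots,i+5\}$ and $v_{i+2}$ exactly for $j\in\{i,\dots,i+4\}$, we obtain $\gamma_i(S')=3$, $\gamma_{i+5}(S')=\gamma_{i+5}(S)-1$, and $\gamma_j(S')=\gamma_j(S)$ for all other $j$. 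The decisive point is that $\gamma_{i+5}(S)\ge 4$: the triple $N^+(R_i)$ already lies in $\mathcal{B}_{i+5}$, and were $\gamma_{i+5}=3$ we would have $\mathcal{B}_{i+5}\cap S=\{v_{i+3},u_{i+3},v_{i+4}\}$, making $u_{i+5}$ — whose closed neighbourhood is contained in $\mathcal{B}_{i+5}$ and avoids that triple, and which exists because $i\ne f-5$ (verified below) — undominated. Hence $\gamma_{i+5}(S')\ge 3$; so $S'$ is still a Type~I set and the couple number has decreased by exactly one.

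Finally one checks $\mathcal{B}_f\cap S'=\mathcal{B}_f\cap S$. The exchange alters only the blocks $\mathcal{B}_j$ with $j\in\{i,\dots,i+5\}$, and since $i\notin F$ the possibility that $u_{i+3}$ or $v_{i+2}$ lies in $\mathcal{B}_f$ forces $i\in\{f-5,f-4,f-3\}$. However, $i=f-3$ would require $u_f=u_{i+3}\in S$; $i=f-4$ would require $u_{f-1}=u_{i+3}\in S$, contradicting $M_f\cap S=\es$; and $i=f-5$ would require $\{v_{f-2},u_{f-2},v_{f-1}\}=N^+(R_i)\subseteq S$, which is impossible for each of the five patterns allowed for $\mathcal{B}_f\cap S$ in a Type~II or Type~III set. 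So in the case $N^+(R_i)\subseteq S$ the exchange never meets $\mathcal{B}_f$ and the lemma follows. In the mirror case $N^+(L_i)\subseteq S$ the same reasoning kills every boundary subcase except $i=f+5$ with $S$ a Type~III$(\mathrm{a})$ set, where one uses instead $S'=S-u_{f+2}+v_{f+3}$: this raises $\gamma_{f+5}$ to $3$, lowers $\gamma_f$ from $4$ to $3$, and turns $\mathcal{B}_f\cap S$ into $\{v_{f-1},v_{f+1},v_{f+2}\}$ — the pattern of Figure~\ref{figB}(c) — after which the transformation $-v_{f-1}+u_{f-2}$ used after Figure~\ref{figB} restores a Type~II set, and tracing block counts through both moves gives couple number $c-1$. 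The only genuinely delicate ingredients are thus the inequality $\gamma_{i+5}(S)\ge 4$ — which is exactly what stops the troublesome couple from merely sliding five blocks along the cycle — and this boundary bookkeeping against $\mathcal{B}_f$.
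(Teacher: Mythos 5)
Your proof is correct and follows essentially the same route as the paper: force $u_i\in S$, deduce $N^+(R_i)\subseteq S$ (or its mirror), show $\gamma_{i+5}\geq 4$, and shift $u_{i+3}$ into $\mathcal{B}_i\cap\mathcal{B}_{i+5}$ (the paper uses $u_{i+2}$ where you use $v_{i+2}$, an immaterial difference). Your extra bookkeeping against $\mathcal{B}_f$ — in particular identifying and repairing the boundary subcase $i=f+5$ with a Type~III(a) set, which the paper's ``the other case is similar'' silently passes over even though the naive mirrored exchange would there destroy the Type~III pattern — is a genuine and welcome tightening of the published argument.
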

\begin{proof} Clearly, by Lemma~\ref{lmm:P_i=3}, $u_i\in S$. Since
$S\cap\{v_{i-1},v_i,v_{i+1}\}=\es$, the other vertex of
$\mathcal{B}_i$ in $S$, say $y$, must be in
$\{u_{i-2},v_{i-2},u_{i-1}\}\cup \{u_{i+2},v_{i+2},u_{i+1}\}$. We
only consider the case where $y\in \{u_{i-2},v_{i-2},u_{i-1}\}$ (see
Figure \ref{fig2}). The other case is similar. Since $R_i\cap
S=\es$, by using a similar argument as in Lemma~\ref{lmm:P_j>1},
$N^+(R_i)\subset S$. Clearly, at least one vertex in
$\mathcal{B}_{i+5}\setminus N^+(R_i)$ must be in $S$ so that
$u_{i+5}$ and $u_{i+6}$ are dominated. This results in
$\gamma_{i+5}\geqslant 4$ no matter whether $i+5$ or $i+6$ is equal
to $f$ or not. Now let $S'=S-u_{i+3}+u_{i+2}$. It is easy to check
that $\gamma_j(S')=\gamma_j(S)$ for $1\leqslant j\leqslant n$ except
$j\in \{i,i+5\}$. However, $\gamma_i(S')=3$ and
$\gamma_{i+5}(S')\geqslant 3$. Thus the couple number of $S'$ is one
less than that of $S$. This completes the proof. \qed\end{proof}

\begin{figure}[htb]
\begin{center}
\includegraphics[scale=0.35]{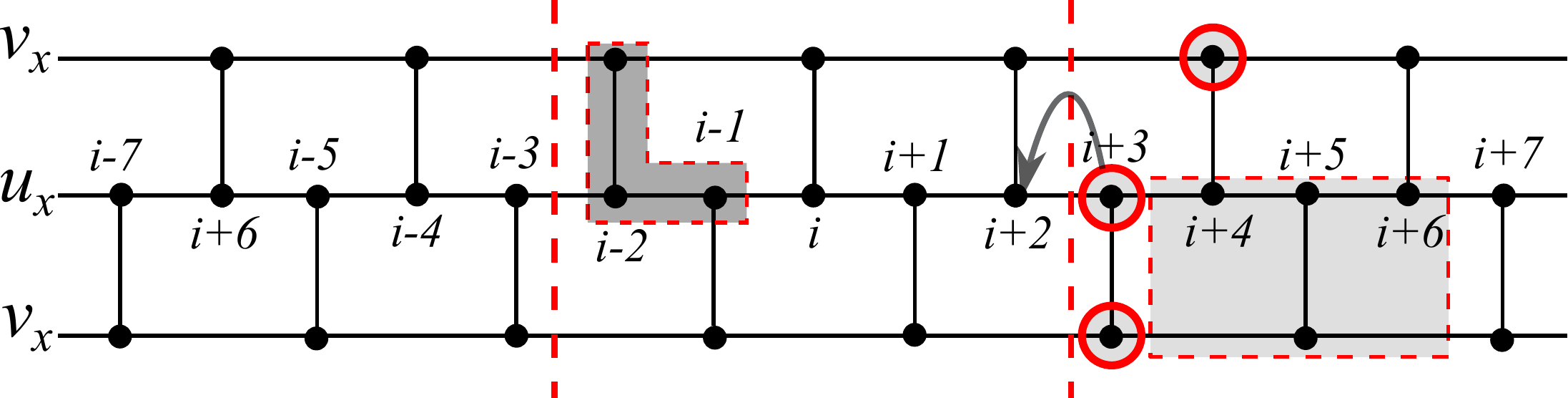}
\caption{\label{fig2}An illustration for Lemma \ref{lmm:gamma=2}.}
\end{center}
\end{figure}

\begin{remark}
Note that if $i=f-2$ $($respectively, $i=f+2)$ in
Lemma~\ref{lmm:gamma=2}, then $u_f\in R_i$ $($respectively, $u_f\in
L_i)$. This yields $u_{i+2}=u_f$ $($respectively, $u_{i-2}=u_f)$
which is removed under our assumption. Thus we cannot obtain a Type
II $($or III$)$ set $S'$ by setting $S'=S-u_{i+3}+u_{i+2}$
$($respectively, $S'=S-u_{i-3}+u_{i-2})$. For the case where $i\in
\{f-1, f+1\}$, $u_i$ even might not be in $S$ since $u_f$ is either
$u_{i+1}$ or $u_{i-1}$ which is removed.
\end{remark}
Hereafter, we assume that $S$ is with the smallest couple number if
$S$ is a Type II or III set.

\begin{definition}
\label{pseudogamma2} Let $S$ be a Type II $($or III$)$ set with the
smallest couple number. A vertex $u_i$ for $i\notin F$ is called a
pseudo-couple vertex with respect to $S$ if $S\cap
\{v_{i-1},v_i,v_{i+1}\}=\es$.
\end{definition}

Notice that if $u_i$ is a pseudo-couple vertex, then $\gamma_i\geq
3$ when $S$ is a Type II $($or III$)$ set with the smallest couple
number.

\begin{lemma}\label{lm:gammageq3}
Assume that $S$ is a Type II or III set with the smallest couple
number. If there exists $\gamma_i=2$ for $i\notin F$, then either
$\gamma_{i+2} \geqslant 4$ or $\gamma_{i-2} \geqslant 4$.
\end{lemma}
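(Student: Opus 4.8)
The plan is to argue that if $\gamma_i = 2$ for some $i \notin F$, then the two vertices of $\mathcal{B}_i \cap S$ are forced into a configuration that leaves one "side block" (either $\mathcal{B}_{i+2}$ or $\mathcal{B}_{i-2}$) badly under-dominated from within, so that it must absorb at least four vertices of $S$. First I would dispose of the easy subcase: by Lemma~\ref{lmm:gamma=2} together with the standing minimality assumption on the couple number, $S$ cannot satisfy $S \cap \{v_{i-1}, v_i, v_{i+1}\} = \es$ while $\gamma_i = 2$ (that would produce a Type II/III set with smaller couple number). Hence $S$ must contain at least one of $v_{i-1}, v_i, v_{i+1}$. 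Since $u_f \notin M_i$ — here I need to check, using $i \notin F$, that none of $u_{i-1}, u_i, u_{i+1}$ equals $u_f$, so Lemma~\ref{lmm:P_i=3} applies if $u_i \notin S$ — we get $u_i \in S$ (otherwise $\gamma_i \geq 3$). So the two elements of $\mathcal{B}_i \cap S$ are exactly $u_i$ and one vertex $y \in \{v_{i-1}, v_i, v_{i+1}\}$, by the previous sentence combined with $\gamma_i = 2$.

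Next I would do the case analysis on $y$. The three choices $y = v_{i-1}$, $y = v_i$, $y = v_{i+1}$ are symmetric under reversal up to swapping the roles of $L_i$ and $R_i$ (so of $\mathcal{B}_{i+2}$ and $\mathcal{B}_{i-2}$), so it suffices to treat, say, $y = v_{i+1}$ and $y = v_i$; in each case I track which vertices of one full block get dominated by $\{u_i, y\}$ and which of its $M$-, $L$-, $R$-vertices remain. For $y = v_{i+1}$, the elements of $L_i = \{v_{i-1}, u_{i-2}, v_{i-2}\}$ are undominated by $\mathcal{B}_i \cap S$, and — exactly as in the proof of Lemma~\ref{lmm:P_j>1} — since no two of them share a neighbour outside $\mathcal{B}_i$, all of $N^+(L_i) = \{v_{i-3}, u_{i-3}, v_{i-4}\}$ must lie in $S$; then I argue that $u_{i-4}$ and $u_{i-5}$ still need a dominator, forcing at least one more vertex of $\mathcal{B}_{i-2} \setminus N^+(L_i)$ into $S$, giving $\gamma_{i-2} \geq 4$. (The count: $N^+(L_i) \subseteq \mathcal{B}_{i-2}$ contributes three, plus one more.) The case $y = v_i$ is handled the same way, examining which side's $L$ or $R$ set is left empty by $\{u_i, v_i\}$; since $v_i$ dominates $v_{i\pm 2}$, I expect one of $L_i \setminus \{v_{i\mp 2}\}$ or $R_i \setminus \{v_{i\pm 2}\}$ to be undominated, and the same neighbourhood-disjointness argument then pushes four vertices into the corresponding side block.

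The main obstacle will be the bookkeeping around the indices $f$ and the modular wrap: I must confirm at each step that the vertices I am forcing into $S$ (the $N^+$-sets and the extra vertex of the side block) are not $u_f$, since the faulty vertex is barred from $S$. The hypothesis $i \notin F$ only guarantees $\mathcal{B}_i$ itself avoids $u_f$ in its $M$-part, but $N^+(L_i)$ or $N^+(R_i)$ could in principle meet $\{u_f\}$; I would need to observe that $u_f \in N^+(L_i)$ only if $f \in \{i-3\}$ (the only $u$-vertex there), and in that boundary case re-run the domination argument using the available non-faulty vertices of $\mathcal{B}_{i-2}$, or invoke the reversal symmetry to move the pressure to the other side where the faulty vertex is absent — the fact that we only need \emph{one} of $\gamma_{i+2} \geq 4$ or $\gamma_{i-2} \geq 4$ gives exactly the slack needed to route around $u_f$. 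Once the index hygiene is pinned down, the argument is a routine extension of the technique already used in Lemmas~\ref{lmm:P_j>1} and~\ref{lmm:gamma=2}.
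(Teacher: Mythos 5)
Your opening reduction (ruling out the pseudo-couple situation via Lemma~\ref{lmm:gamma=2} and the minimal couple number, then forcing $u_i\in S$ through Lemma~\ref{lmm:P_i=3} because $i\notin F$ keeps $u_f$ out of $M_i$) matches the paper, and your sketch for $y=v_i$ is essentially the paper's Case 1. However, the case $y=v_{i+1}$ (and its mirror $y=v_{i-1}$) contains a genuine error. With $\mathcal{B}_i\cap S=\{u_i,v_{i+1}\}$ the set $L_i$ is \emph{not} entirely undominated: the inner edges are of the form $v_jv_{j+2}$, so $v_{i+1}$ is adjacent to $v_{i-1}$ and already dominates it. Consequently $v_{i-3}$ is not forced into $S$, the claim $N^+(L_i)\subseteq S$ fails, and the conclusion you draw on that side is false in general: $\mathcal{B}_{i-2}\cap S=\{u_i,u_{i-3},v_{i-4}\}$ dominates every vertex of $\mathcal{B}_{i-2}$ (with $v_{i-1}$ covered from outside by $v_{i+1}$), so $\gamma_{i-2}=3$ is perfectly consistent with this configuration. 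Your fallback step is also unsound: $u_{i-3}\in N^+(L_i)$ would itself dominate $u_{i-4}$, while $u_{i-5}\notin\mathcal{B}_{i-2}$ and can be dominated from outside that block, so no further vertex of $\mathcal{B}_{i-2}$ is forced; and note that you overlooked $u_i\in\mathcal{B}_{i-2}$, so if $N^+(L_i)\subseteq S$ really had been forced you would already have had four vertices without any extra one.

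The lemma still holds in this case because the \emph{other} side works, and that is where the paper looks: with $\mathcal{B}_i\cap S=\{u_i,v_{i+1}\}$, the vertices $u_{i+2}$ and $v_{i+2}$ have no neighbour in $\mathcal{B}_i\cap S$, so $u_{i+3}$ and $v_{i+4}$ must belong to $S$ (here $u_{i+2}\neq u_f$ since $i\notin F$, and if $u_{i+3}=u_f$ this configuration simply cannot occur, so the claim is vacuous there); then $\{u_i,v_{i+1},u_{i+3},v_{i+4}\}\subseteq\mathcal{B}_{i+2}\cap S$ gives $\gamma_{i+2}\geqslant 4$. So the repair is to route the pressure to $\mathcal{B}_{i+2}$ when $y=v_{i+1}$ (and, symmetrically, to $\mathcal{B}_{i-2}$ when $y=v_{i-1}$), rather than to the side you chose.
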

\begin{proof}  Since $\gamma_i=2$ and $S$ is with the smallest couple number,
by Lemma \ref{lmm:gamma=2}, $u_i$ cannot be a pseudo-couple vertex
and a vertex $x\in \{v_i,v_{i-1},v_{i+1}\}$ must be in $S$. We
consider the following three cases.

\noindent {\bf Case 1.} $x=v_i$.

It is clear that $v_{i+1}$ and $u_{i+2}$ must be dominated by
$v_{i+3}$ and $u_{i+3}$, respectively. Similarly, $v_{i-1}$ and
$u_{i-2}$ must be dominated by $v_{i-3}$ and $u_{i-3}$, respectively
(see Figure \ref{fig5}(a)). Thus both $\gamma_{i+2}$ and
$\gamma_{i-2}$ are greater than or equal to 4.

\noindent {\bf Case 2.} $x=v_{i+1}$.

In this case,  $u_{i+3}$ and $v_{i+4}$ must be in $S$ so that
$u_{i+2}$ and $v_{i+2}$ are dominated. Thus $\gamma_{i+2}\geqslant
4$ (see Figure \ref{fig5}(b)).

\noindent {\bf Case 3.}  $x=v_{i-1}$.

By using a similar argument as in Case 2, the case holds. This
completes the proof. \qed\end{proof}

\begin{figure}[htb]
\begin{center}
\subfigure[Case 1]{
\includegraphics[scale=0.35]{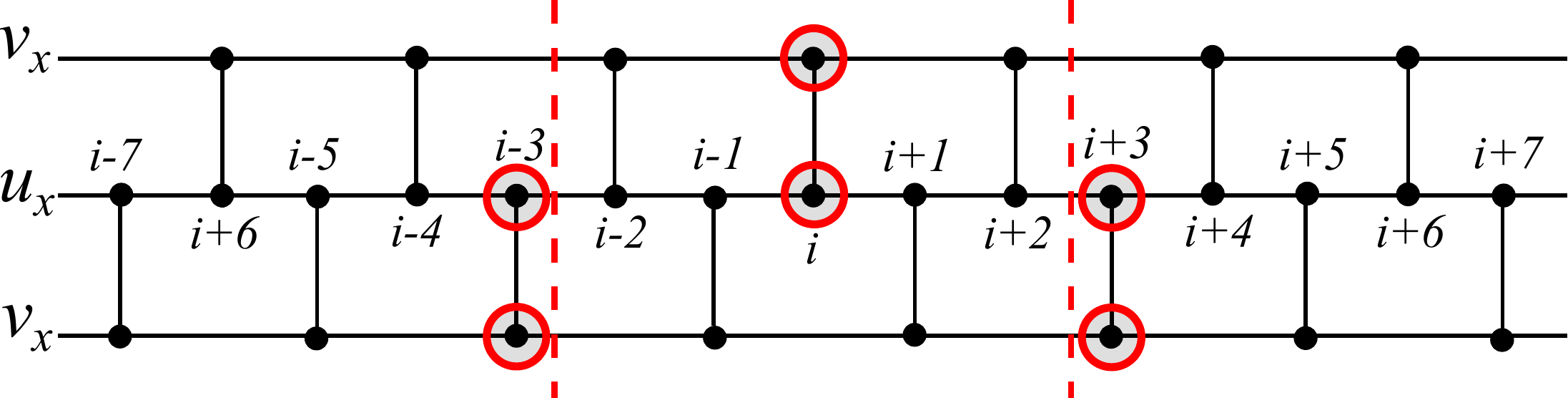}
} \quad \subfigure[Case 2]{
\includegraphics[scale=0.35]{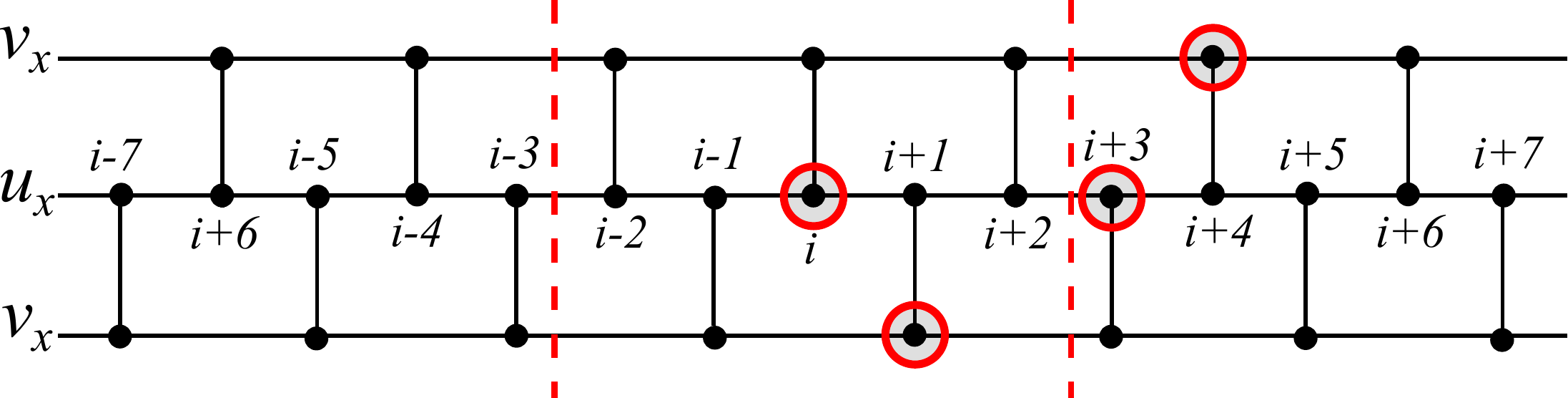}
}  \caption{\label{fig5}Illustrations for Lemma~\ref{lm:gammageq3}.}
\end{center}
\end{figure}

\begin{lemma}\label{lm:j-kneq4}
Assume that $S$ is a Type II or III set with the smallest couple
number. If there exist $\gamma_i=\gamma_j=2$ for distinct $i,j\notin
F$, then $|i-j|\neq 4$.
\end{lemma}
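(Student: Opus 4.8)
\textbf{Proof proposal for Lemma~\ref{lm:j-kneq4}.}

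The plan is to argue by contradiction: assume $S$ is a Type II or III set with the smallest couple number and that there exist distinct $i,j\notin F$ with $\gamma_i=\gamma_j=2$ and $|i-j|=4$. Without loss of generality take $j=i+4$. The key tool is Lemma~\ref{lm:gammageq3}: since $\gamma_i=2$ we have $\gamma_{i+2}\geqslant 4$ or $\gamma_{i-2}\geqslant 4$, and since $\gamma_{j}=\gamma_{i+4}=2$ we have $\gamma_{i+6}\geqslant 4$ or $\gamma_{i+2}\geqslant 4$. The idea is that the two blocks $\mathcal{B}_i$ and $\mathcal{B}_{i+4}$ overlap (they share the pairs indexed $i+2,\dots,i+2$, i.e.\ the middle region), so the structural constraints forced by $\gamma_i=2$ and by $\gamma_{i+4}=2$ must be simultaneously satisfiable, and I want to show they are not.

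First I would invoke the case analysis from the proof of Lemma~\ref{lm:gammageq3}: because $u_i$ (and $u_j$) cannot be a pseudo-couple vertex, one of $v_{i-1},v_i,v_{i+1}$ lies in $S$ and likewise one of $v_{j-1},v_j,v_{j+1}=v_{i+3},v_{i+4},v_{i+5}$ lies in $S$. Note $\mathcal{B}_i$ contains $v_{i+2}$ and $\mathcal{B}_j$ contains $v_{i+2}$ as well (since $j-2=i+2$), so the two blocks share vertices around index $i+2$; more precisely $R_i=\{v_{i+1},u_{i+2},v_{i+2}\}$ and $L_j=\{v_{i+3},u_{i+2},v_{i+2}\}$ share $\{u_{i+2},v_{i+2}\}$. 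I would then run through the $3\times 3 = 9$ combinations of which $v$-vertex is chosen in each block, and in each case trace the forced dominators exactly as in Lemma~\ref{lm:gammageq3}: e.g.\ if $v_i\in S$ then $u_{i+2}$ is dominated only by $u_{i+3}$ and $v_{i+1}$ only by $v_{i+3}$, so $\{u_{i+3},v_{i+3}\}\subseteq S$; meanwhile the choice in $\mathcal{B}_j$ forces dominators near indices $i+3,\dots,i+6$. Counting $|\,S\cap(\mathcal{B}_i\cup\mathcal{B}_j)\,|$ — or more locally, showing some vertex (typically $u_{i+2}$, $u_{i+3}$, or $v_{i+2}$) is either left undominated or is forced into $S$ in a way that makes $\gamma_i$ or $\gamma_j$ exceed $2$, or that contradicts $N(u_f)\cap S=\es$ or the Type II/III pattern — gives the contradiction in each of the nine cases.

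The main obstacle will be organizing the nine (really fewer, after using the left–right symmetry of the configuration, though here the symmetry is broken by the roles of $i<j$) sub-cases cleanly and making sure the forced-dominator arguments near the shared vertices $u_{i+2},v_{i+2},u_{i+3},v_{i+3}$ are airtight — in particular checking that the exchange/relabelling reductions that put $S$ into Type II/III form, together with minimality of the couple number, have not already been implicitly used up. I would lean on Lemma~\ref{lm:gammageq3} as a black box wherever possible (so that I only need its conclusions $\gamma_{i\pm 2}\geqslant 4$ and the fact that a $v$-vertex adjacent to $u_i$ is in $S$) rather than re-deriving everything, and handle the genuinely new interaction — namely that $\gamma_{i+2}\geqslant 4$ is forced from \emph{both} sides while $\gamma_i=\gamma_{i+4}=2$ leaves too few set-vertices to also dominate $u_{i+2}$ and its neighbours — as the crux. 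A clean way to finish may be a global counting argument: the segment of blocks from $\mathcal{B}_{i-2}$ through $\mathcal{B}_{i+6}$ would be forced to contain more vertices of $S$ than a minimum dominating set can afford, contradicting $|S|=\gamma(P_f(n,2))$, but I expect the purely local contradiction to be shorter and is what I would attempt first.
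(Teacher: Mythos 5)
Your plan is essentially the paper's proof: assume $j=i+4$, use the smallest-couple-number hypothesis (via Lemma~\ref{lmm:gamma=2}) to place some $x\in\{v_{i-1},v_i,v_{i+1}\}$ in $S$, and trace the forced dominators, which land inside $\mathcal{B}_{i+4}$ and, together with Lemma~\ref{lmm:P_i=3} applied to $u_{i+4}$, force $\gamma_j\geqslant 3$; your worked sub-case $v_i\in S$ giving $u_{i+3},v_{i+3}\in S$ is exactly the paper's Case~1, and the cases $x=v_{i+1}$ (forcing $u_{i+3},v_{i+4}\in S$) and $x=v_{i-1}$ (forcing $N^+(R_i)\subset S$) finish identically. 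Two remarks. First, your $3\times 3$ split is unnecessary: the three cases at block $i$ alone already contradict $\gamma_{i+4}=2$, so you never need to condition on which $v$-vertex serves $u_j$. Second, the step you single out as the crux is not sound as stated: Lemma~\ref{lm:gammageq3} yields only the disjunctions $\gamma_{i+2}\geqslant 4$ or $\gamma_{i-2}\geqslant 4$ (for $i$) and $\gamma_{i+2}\geqslant 4$ or $\gamma_{i+6}\geqslant 4$ (for $j$), so $\gamma_{i+2}\geqslant 4$ is not forced from both sides, and using that lemma purely as a black box cannot close the argument; it is precisely the explicit forced-dominator tracing that you also plan (and that the paper carries out) which delivers the contradiction, so keep that as the actual engine of the proof.
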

\begin{proof} Suppose to the contrary that $|i-j|=4$. That is, $j$ is either
equal to $i+4$ or $i-4$. We only consider the case where $j=i+4$.
The other case is similar. By Lemma~\ref{lmm:gamma=2}, $u_i$ cannot
be a pseudo-couple vertex and a vertex $x\in
\{v_i,v_{i-1},v_{i+1}\}$ must be in $S$. Analogous to
Lemma~\ref{lm:gammageq3}, we consider the following three cases.

\noindent {\bf Case 1.} $x=v_i$.

We can find that $u_{i+3},v_{i+3}\in \mathcal{B}_{i+4}\cap S$ (see
Figure \ref{fig5}(a)). If $u_{i+4}$ is also in $S$, then
$\gamma_j=\gamma_{i+4}\geqslant 3$, a contradiction. If $u_{i+4}$ is
not in $S$, then, by Lemma~\ref{lmm:P_i=3}, $\gamma_j\geqslant 3$, a
contradiction too.

\noindent {\bf Case 2.} $x=v_{i+1}$.

In this case, $u_{i+3},v_{i+4}\in \mathcal{B}_{i+4}\cap S$ (see
Figure \ref{fig5}(b)). By using a similar argument as in Case 1, we
can find that $\gamma_j\geqslant 3$. Thus this case is also
impossible.

\noindent {\bf Case 3.}  $x=v_{i-1}$.

In this case, all vertices in $N^+(R_i)$ must be in $S$. This
contradicts that $\gamma_j=2$. This concludes the proof of this
lemma. \qed\end{proof}

\section{Main results}
\label{main results}

By Lemma~\ref{lmm:P_i=3}, $\gamma_f\geq 3$. In the following, we
investigate the cardinalities of minimum dominating sets of
$P_f(n,2)$ under all possible values of $\gamma_f$.

By Lemma~\ref{lm:gammageq3}, if there exists $\gamma_i=2$ for
$i\notin F$, then either $\gamma_{i+2} \geqslant 4$ or $\gamma_{i-2}
\geqslant 4$ must hold. This yields
$(\gamma_i+\gamma_{i+2})/2\geqslant 3$ or
$(\gamma_i+\gamma_{i-2})/2\geqslant 3$. By Lemma~\ref{lm:j-kneq4},
if $\gamma_i=2$, then both $\gamma_{i-4}$ and $\gamma_{i+4}$ are
greater than or equal to 3. This means that no two distinct
$\gamma_i=\gamma_j=2$ use the same $\gamma_k$ to obtain the average
number 3. This ensures that the average number of $\gamma_i$ is
greater than or equal to 3 when $i\notin F$. To prove the lower
bound of $|S|$, our main idea is to count the number of $\gamma_i=2$
for $i\in F$, say $x$, which cannot gain support from any other
$\gamma_j$ so that their average is greater than or equal to 3. This
yields $5|S|=\sum_{i=1}^n\gamma_i\geqslant 3n-x$.

\begin{lemma}\label{lm:TypeIIbcd}
If $S$ is a Type III set with the smallest couple number, then
$|S|\geqslant\left\lceil \frac{3n}{5} \right\rceil$.
\end{lemma}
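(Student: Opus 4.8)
The plan is to use the counting/averaging argument sketched in the paragraph immediately preceding the lemma, specialized to a Type III set. Recall that a Type III set $S$ has $\gamma_f = 4$ and $N(u_f)\cap S = \es$, so $M_f\cap S = \es$ and $\mathcal B_f\cap S\subseteq L_f\cup R_f$ is one of the four configurations in Definition~\ref{Type II set}. First I would set up the basic identity $5|S| = \sum_{i=1}^n \gamma_i(S)$, which holds because each vertex of $P(n,2)$ lies in exactly five blocks $\mathcal B_i$. The goal is to show $\sum_{i=1}^n \gamma_i \geq 3n$, since then $|S|\geq 3n/5$, and as $|S|$ is an integer this gives $|S|\geq\lceil 3n/5\rceil$.

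Next I would partition the indices into those in $F$ (the five indices around the faulty block) and those outside $F$. For $i\notin F$ with $u_i\notin S$ we have $\gamma_i\geq 3$ by Lemma~\ref{lmm:P_i=3} (note $u_f\notin M_i$ automatically when $i\notin F$). The only way to have $\gamma_i = 2$ for $i\notin F$ is $u_i\in S$, and then by Lemma~\ref{lm:gammageq3} at least one of $\gamma_{i+2},\gamma_{i-2}$ is $\geq 4$, while by Lemma~\ref{lm:j-kneq4} no two such deficient indices $i,j\notin F$ satisfy $|i-j|=4$. The standard way to package this is a charging/discharging scheme: each index $i\notin F$ with $\gamma_i = 2$ is $1$ unit below the target $3$, and it borrows that unit from whichever of $\gamma_{i\pm 2}$ is $\geq 4$ (which has a surplus of at least $1$ over the target). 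Lemma~\ref{lm:j-kneq4} together with Lemma~\ref{lm:gammageq3} is exactly what guarantees no index is asked to lend more than its surplus — if indices $i$ and $j$ both tried to borrow from the same $\gamma_k$, one checks $k = i\pm 2 = j\pm 2$ forces $|i-j|\in\{0,4\}$, contradicting distinctness and Lemma~\ref{lm:j-kneq4} (the case $|i-j|=0$ being trivial and any collision through $F$ handled separately below). Hence $\sum_{i\notin F}\gamma_i \geq 3|\{i : i\notin F\}| = 3(n-5)$, provided none of the donor indices falls inside $F$ in a way that breaks the bookkeeping.

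Then I would handle the five indices in $F$ directly by inspecting the four Type III configurations. In each of Figures~\ref{figB}(a)--(d) the block $\mathcal B_f$ contributes $\gamma_f = 4$, and I would compute $\gamma_{f-2},\gamma_{f-1},\gamma_{f+1},\gamma_{f+2}$ from the known positions of $S$-vertices inside $\mathcal B_f$ plus any forced vertices in $N^+(L_f)\cup N^+(R_f)$ needed to dominate $L_f$, $R_f$, and the neighbors of $u_f$'s block. The upshot I expect is $\sum_{i\in F}\gamma_i\geq 15$, i.e.\ the faulty block and its four neighbors already average at least $3$ on their own, so there is no deficit to be covered from outside $F$ and the $i\notin F$ count is clean. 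One has to be slightly careful that a deficient index $i\notin F$ does not try to borrow from an index in $F$; but since every index in $F$ already meets (or exceeds) the target $3$ with the possible surplus available, and in fact $\gamma_f=4$ gives a surplus there, any such borrowing only helps. Combining, $5|S| = \sum_{i\in F}\gamma_i + \sum_{i\notin F}\gamma_i \geq 15 + 3(n-5) = 3n$, whence $|S|\geq\lceil 3n/5\rceil$.

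The main obstacle is the bookkeeping at the boundary between $F$ and its complement: I need to verify that the discharging argument for $i\notin F$ is not corrupted by the faulty block — that is, that when $i=f\pm 2\pm 2$ or when a forced donor vertex $\gamma_{i\pm2}$ has index in $\{f-2,\dots,f+2\}$, the inequalities still hold. The remark following Lemma~\ref{lmm:gamma=2} flags exactly the danger zone ($i\in\{f-2,f-1,f+1,f+2\}$), so the proof will need a short case analysis of indices $i$ with $|i-f|\leq 4$, using the explicit Type III pictures to confirm the local sums. I would expect this to reduce to checking a bounded window of roughly nine consecutive blocks around $u_f$ for each of the four Type III configurations, after which the periodic averaging outside the window finishes the argument.
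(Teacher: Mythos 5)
Your overall framework (the identity $5|S|=\sum_i\gamma_i$, the discharging outside $F$ via Lemmas~\ref{lm:gammageq3} and~\ref{lm:j-kneq4}, and a local inspection of the window around $u_f$) matches the paper's strategy for Type III(a)--(c), but your central quantitative claim for the window is false for Type III(d), and that is exactly the hard case. For $\mathcal{B}_f\cap S=\{u_{f-2},v_{f-2},v_{f+1},u_{f+2}\}$ (Figure~\ref{figB}(d)) every vertex of $\mathcal{B}_f-u_f$ is already dominated from inside the block, so no vertex of $N^+(L_f)\cup N^+(R_f)$ is forced; one checks that $\gamma_{f-2}$, $\gamma_{f+1}$ and $\gamma_{f+2}$ can each equal $2$, so the worst case gives $\sum_{i\in F}\gamma_i\geq 2+3+4+2+2=13$, not $15$. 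Hence there \emph{is} a deficit inside $F$, and it cannot be repaired by your generic scheme: the deficient indices lie in $\{f-2,f+1,f+2\}$, precisely the indices excluded from Lemmas~\ref{lmm:gamma=2}--\ref{lm:j-kneq4} (see the Remark after Lemma~\ref{lmm:gamma=2}), so "borrow from $\gamma_{i\pm2}\geq 4$" is not available to them.

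The paper closes this gap with a dedicated analysis of Type III(d): if $\gamma_{f-2}=2$ it shows that $u_{f-5},v_{f-5}$ are forced, giving $\gamma_{f-3},\gamma_{f-4}\geq 4$ as external donors for the in-window deficits (and it verifies these donors are not also claimed by a possible $\gamma_{f-5}=2$); if $\gamma_{f-2}\geq 3$ it either forces $\gamma_{f-3},\gamma_{f-4}\geq 4$ again, or, when $u_{f-3}\in S$ or $v_{f-3}\in S$, it does not prove the bound directly at all but converts $S$ by two exchanges into a Type II set, deferring to the separate Type II lemmas. You did flag the boundary bookkeeping as the main obstacle and proposed a bounded-window check, but as written your plan asserts "there is no deficit to be covered from outside $F$," which fails for III(d); without the forced-vertex analysis of $\mathcal{B}_{f-3},\mathcal{B}_{f-4}$ (or the reduction to Type II), the argument does not go through for that configuration.
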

\begin{proof} First we consider the case where $S$ is a Type III(a), III(b), or
III(c) set. By inspection (see Figure~\ref{fig8}), it can be found
that $\gamma_i\geqslant 3$ for $i\in F$ if $S$ is a Type III(a) or
III(b) set. For the case where $S$ is a Type III(c) set, one of the
elements in $\{v_{f-3},u_{f-3},u_{f-4}\}$ (respectively,
$\{v_{f+3},u_{f+3},u_{f+4}\}$) must be in $S$ so that $u_{f-3}$
(respectively, $u_{f+3}$) is dominated. Thus $\gamma_i\geqslant 3$
for $i\in F$ if $S$ is a Type III(c) set. This also implies that, in
those three types of dominating sets, if there exists $\gamma_i=2$
in $S$, then, by Lemmas~\ref{lm:gammageq3} and \ref{lm:j-kneq4},
either $(\gamma_i+\gamma_{i+2})/2 \geqslant 3$ or
$(\gamma_i+\gamma_{i-2})/2 \geqslant 3$ must hold. As a consequence,
$\sum_{j=1}^n \gamma_j=5|S|\geqslant 3n$. This yields
$|S|\geqslant\left\lceil \frac{3n}{5} \right\rceil$.

To complete the proof, it remains to consider the case where $S$ is
a Type III(d) set. According to the possible values of
$\gamma_{f-2}$, we consider the following two cases.

\noindent {\bf Case 1.} $\gamma_{f-2}=2$.

Since $\gamma_{f-2}=2$, vertices $u_{f-5}$ and $v_{f-5}$ must be in
$S$ so that $u_{f-4}$ and $v_{f-3}$ are dominated (see
Figure~\ref{fig9}(a)). This further implies that both $\gamma_{f-3}$
and $\gamma_{f-4}$ are greater than or equal to 4. Note that if
$\gamma_{f-5}=2$, then $u_{f-8}$ and $v_{f-8}$ must be in $S$ so
that $u_{f-7}$ and $v_{f-6}$ are dominated. Accordingly,
$\gamma_{f-7}\geqslant 4$. Thus $\gamma_{f-5}$ can gain support from
$\gamma_{f-7}$ such that $(\gamma_{f-5}+\gamma_{f-7})/2\geqslant 3$.
We can find that the minimum values of $\gamma_{f-4}, \gamma_{f-3},
\ldots, \gamma_{f+2}$ are $4,4,2,3,4,2$, and $2$, respectively. Thus
every $\gamma_i=2$ in $\mathcal{B}_f$ can gain support from a vertex
$\gamma_j=4$. Thus $\sum_{i=1}^n\gamma_i=5|S|\geqslant 3n$ which
yields $|S|\geqslant \left\lceil \frac{3n}{5} \right\rceil$. Thus
this case holds.

\noindent {\bf Case 2.} $\gamma_{f-2}\geqslant 3$.

If $u_{f-3}\in S$ or $v_{f-3}\in S$, then, after setting
$S'=S-u_{f-2}+v_{f-1}$ and $S''=S'-v_{f+1}+v_{f+3}$, $S''$ becomes a
Type II set which will be considered in Lemma~\ref{lm:ri=3}. We may
assume that $u_{f-3}, v_{f-3} \notin S$ and either $u_{f-4}$ or
$v_{f-4}$ is in $S$ (see Figure \ref{fig9}(b)). Note that, in this
case, $v_{f-5}$ must be in $S$ so that $v_{f-3}$ is dominated. Thus
the minimum values of $\gamma_{f-4}, \gamma_{f-3}, \ldots,
\gamma_{f+2}$ are $4,4,3,3,4,2$, and $2$, respectively. Clearly,
their average is greater than or equal to 3. Note that one of the
vertices in $N[u_{f-6}]$ must be in $S$ so that $u_{f-6}$ is
dominated. Thus both $\gamma_{f-5}$ and $\gamma_{f-6}$ are greater
than or equal to 3. This ensures that they will not gain support
from $\gamma_{f-4}$ and $\gamma_{f-3}$ on computing the average
value 3. Hence $\sum_{i=1}^n\gamma_i=5|S|\geqslant 3n$ and
$|S|\geqslant \left\lceil \frac{3n}{5} \right\rceil$. This completes
the proof. \qed\end{proof}

\begin{corollary}\label{lm:gamma_i>4}
If $S$ is a minimum dominating set with the smallest couple number
under the condition that $N(u_f) \cap S= \es$ and $\gamma_f \geq 4$,
then $|S|\geqslant \left\lceil \frac{3n}{5} \right\rceil$.
\end{corollary}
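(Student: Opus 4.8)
The plan is to reduce Corollary~\ref{lm:gamma_i>4} to the already-established Lemma~\ref{lm:TypeIIbcd} together with the structural normalization performed in Section~\ref{faulty Petersen}. First I would dispatch the case $\gamma_f = 4$: by the classification of all feasible patterns of $\mathcal{B}_f \cap S$ when $N(u_f)\cap S = \es$ and $\gamma_f = 4$ (Figures~\ref{fig8}(a)--(g)), the sets in Figures~\ref{fig8}(e)--(g) are transformable via Proposition~\ref{pp:exchange} into Type II sets, and the remaining four patterns are exactly the Type III(a)--(d) sets. Since transforming a set as in Lemma~\ref{lmm:gamma=2} only decreases the couple number, a Type III set of smallest couple number stays a Type III set under these arguments, so Lemma~\ref{lm:TypeIIbcd} applies directly and gives $|S| \geqslant \left\lceil \frac{3n}{5} \right\rceil$. (A Type II set is handled by a separate lemma, Lemma~\ref{lm:ri=3}, forthcoming in the paper; but for the purpose of this corollary the relevant case produced by $\gamma_f=4$ is Type III.)

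Next I would treat $\gamma_f \geq 5$. The key observation is that the averaging argument sketched just before Lemma~\ref{lm:TypeIIbcd} is robust: for every index $i \notin F$ with $\gamma_i = 2$, Lemma~\ref{lm:gammageq3} provides a neighbor $\gamma_{i\pm 2} \geqslant 4$ to pair with, and Lemma~\ref{lm:j-kneq4} guarantees that distinct deficient indices never compete for the same surplus index, so $\sum_{i \notin F}\gamma_i \geqslant 3(n-5)$. It then remains to bound the contribution of the five indices in $F$ from below. Since $\gamma_f \geq 5$, the total $\sum_{i\in F}\gamma_i$ is already large; even using only the trivial bounds $\gamma_i \geq 0$ for $i \in F\setminus\{f\}$ gives $\sum_{i\in F}\gamma_i \geq 5$, which is more than the $3 \cdot 5 - 15 = 0$ one might fear, but we actually want $\sum_{i\in F}\gamma_i$ to cover its ``fair share'' of roughly $15$ minus whatever deficiency the neighboring blocks already absorbed. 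The cleaner route is: the blocks $\mathcal{B}_{f-4},\ldots,\mathcal{B}_{f+4}$ each still satisfy $\gamma_i \geq 2$ (Type I), and a short local analysis around $u_f$ — using that $M_f \cap S = \es$ so $u_{f-1}, u_{f+1}$ must be dominated from outside $M_f$, hence extra vertices lie in $L_f \cup R_f \cup N^+(L_f) \cup N^+(R_f)$ — shows the windowed sum $\sum_{i \in F}\gamma_i$ plus the forced surplus in $\mathcal{B}_{f\pm 2}, \mathcal{B}_{f\pm 3}$ is at least $3\cdot 9 - (\text{deficiency elsewhere})$. Collecting terms, $\sum_{i=1}^n \gamma_i = 5|S| \geqslant 3n$, whence $|S| \geqslant \left\lceil \frac{3n}{5}\right\rceil$.

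The main obstacle I anticipate is the bookkeeping for $\gamma_f \geq 5$: one must be careful that the "support" indices $\gamma_j \geq 4$ furnished by Lemma~\ref{lm:gammageq3} for deficient blocks near $F$ are not double-counted against the surplus already credited to $F$, and the boundary indices $f\pm 2$ (where the Remark after Lemma~\ref{lmm:gamma=2} warns that the couple-reduction move fails because $u_f$ would be the vertex $u_{i\pm 2}$) need individual attention. I would organize this as: (i) if some $\gamma_i = 2$ with $i \in \{f-2, f+2\}$, use the local domination constraints at $u_{f-1}$ or $u_{f+1}$ to force $\gamma_f + \gamma_{f\pm 2} + \gamma_{f\mp 2}$ to be large enough on its own; (ii) otherwise $\gamma_i \geq 3$ throughout $F \setminus \{f\}$ except possibly at $f\pm 1$, and then $\sum_{i\in F}\gamma_i \geq 5 + 3 + 3 = 11$ comfortably suffices once combined with the global average-$3$ estimate on the complement. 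In all subcases the inequality $\sum_{i=1}^n \gamma_i \geq 3n$ follows, completing the proof. \qed
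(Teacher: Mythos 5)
For the case $\gamma_f=4$ your argument coincides with the paper's own proof: the paper disposes of this corollary in a single sentence, observing that after the normalization of Section~\ref{faulty Petersen} the set may be assumed to be a Type III set with smallest couple number, and invoking Lemma~\ref{lm:TypeIIbcd}. (Like the paper, you pass quickly over the patterns of Figures~\ref{fig8}(e)--(g): these are transformed into Type II sets with $\gamma_f=3$, about which Lemma~\ref{lm:TypeIIbcd} says nothing; that branch is really settled by the Type II lemmas (Lemmas~\ref{5nn3}, \ref{tm:ri=3}, \ref{5kn+1+2}) inside the main theorem, and those lemmas return the bound $\lceil 3n/5\rceil$ only for $n\not\equiv 1,2\pmod 5$. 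This looseness is inherited from the corollary as stated, not something you introduced.)

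Where your write-up genuinely falls short is the $\gamma_f\geq 5$ bookkeeping --- the paper never spells this case out either, but you made it part of your proof, so it has to stand on its own, and as written it does not. Concretely: (a) the inequality $\sum_{i\notin F}\gamma_i\geqslant 3(n-5)$ does not follow from Lemmas~\ref{lm:gammageq3} and \ref{lm:j-kneq4}, because the supporting index $i\pm2$ of a deficient block $i\notin F$ may lie inside $F$ (for instance $i=f-4$ may be supported only by $\gamma_{f-2}\geqslant 4$), in which case the surplus you need outside $F$ is exactly the surplus you have already credited to $F$; you flag this double-counting as an ``anticipated obstacle,'' but flagging it is not resolving it. (b) The arithmetic in your case (ii) is off: under your own hypotheses $\gamma_f\geq 5$, $\gamma_{f\pm2}\geq 3$, $\gamma_{f\pm1}\geq 2$, the window sum is at least $15$, not $11$, and you actually need the full $15$, since a bound of the form $5|S|\geqslant 3n-4$ only gives $|S|\geqslant\lceil(3n-4)/5\rceil$, which is strictly smaller than $\lceil 3n/5\rceil$ whenever $n\not\equiv 0\pmod 5$; nothing short of $\sum_i\gamma_i\geqslant 3n$ (or a residue-by-residue refinement) closes the claim. (c) Case (i), where some $\gamma_{f\pm2}=2$, is asserted (``large enough on its own'') but never argued, even though $\gamma_{f\pm2}=2$ together with $\gamma_f\geq5$ and $\mathcal{B}_f\cap S\subseteq L_f\cup R_f$ is a strong constraint that deserves an explicit check. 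To make this half rigorous you need either a local analysis of the kind the paper performs inside Lemma~\ref{lm:TypeIIbcd}, or an explicit reduction (say via Proposition~\ref{pp:exchange}) of $\gamma_f\geq5$ to the $\gamma_f=4$ / Type III situation.
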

\begin{proof}
Since the case where $S$ is a Type III set with the smallest couple
number is a special case of $\gamma_f \geq 4$, by
Lemma~\ref{lm:TypeIIbcd}, this corollary follows.  \qed\end{proof}

\begin{figure}[htb]
\begin{center}
\subfigure[$\gamma_{f-2}=2$]{
\includegraphics[scale=0.35]{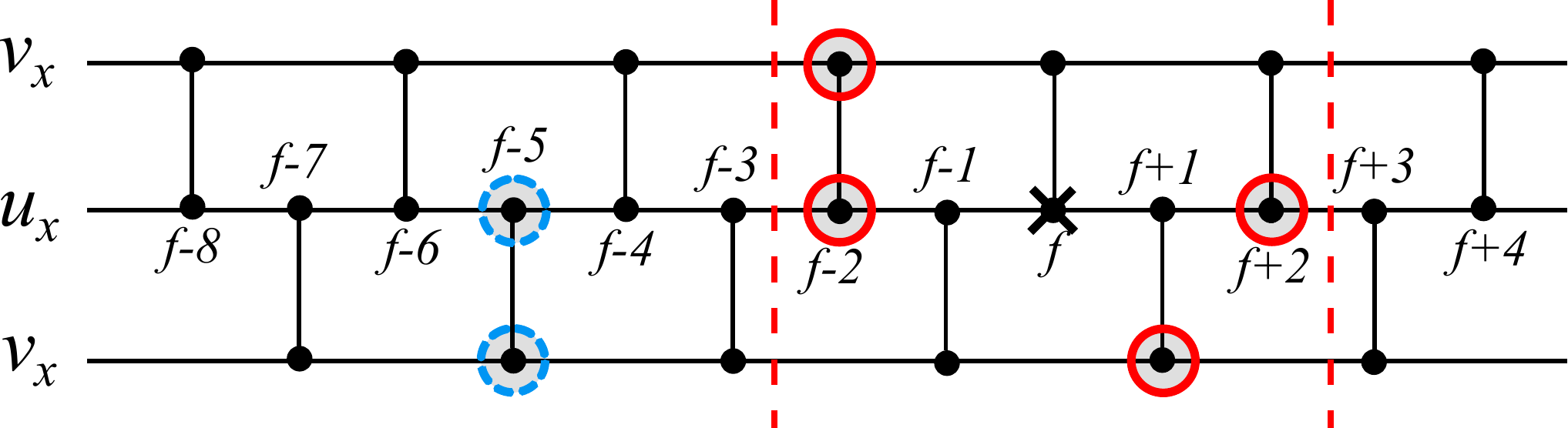}
} \quad \subfigure[$\gamma_{f-2}\geqslant 3$]{
\includegraphics[scale=0.35]{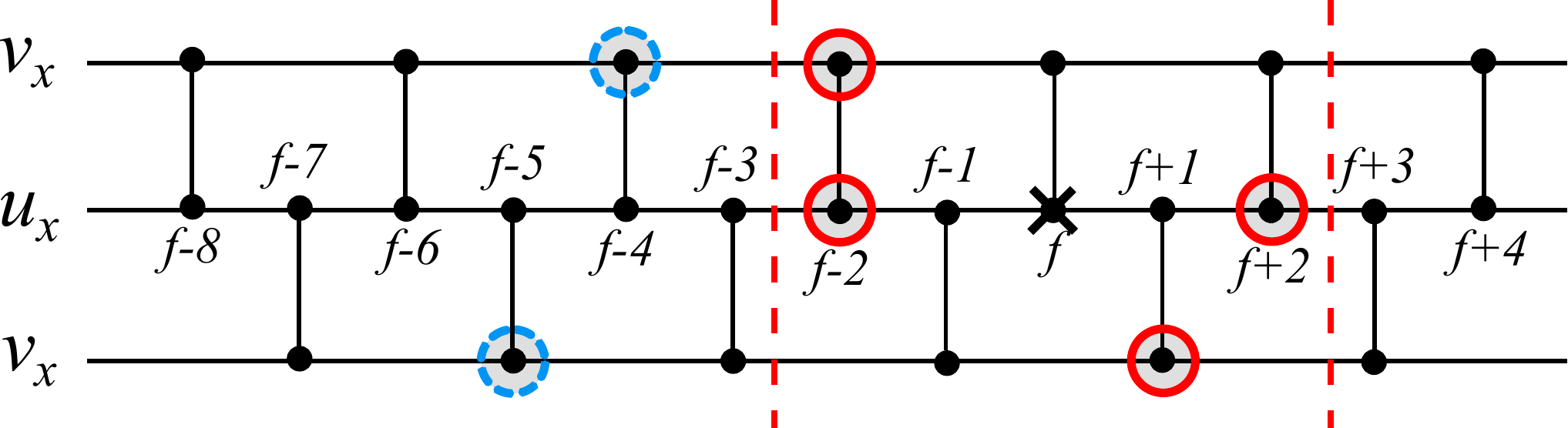}
}  \caption{\label{fig9}Illustrations for Lemma~\ref{lm:gammageq3}.}
\end{center}
\end{figure}

It remains to investigate lower bounds for type II sets. By using a
similar classification in \cite{Ebra09}, we consider the following
five classes: $n=5k, 5k+1, 5k+2, 5k+3$, and $5k+4$.

\begin{lemma}\label{5nn3}
For $n=5k$ or $5k+3$, if $S$ is a Type II set, then
$|S|\geqslant\left\lceil \frac{3n}{5} \right\rceil$.
\end{lemma}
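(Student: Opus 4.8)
The plan is to run the same averaging argument developed for Type III sets, but now bookkeeping the indices in $F$ more carefully because $S$ is a Type II set with $\mathcal{B}_f\cap S=\{u_{f-2},v_{f+1},v_{f+2}\}$, so only $\gamma_f=3$ is guaranteed a priori while the neighbouring blocks may have $\gamma_i=2$. First I would set $x=|\{i\in F:\gamma_i(S)=2\}|$ and recall from the discussion preceding this lemma that every $\gamma_i=2$ with $i\notin F$ gains support from some $\gamma_j\geqslant 4$ with $j\in\{i-2,i+2\}$, and that by Lemma~\ref{lm:j-kneq4} no two such indices compete for the same supporter; hence $\sum_{i=1}^n\gamma_i=5|S|\geqslant 3n-x$. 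So the whole lemma reduces to showing $x$ is small enough that $|S|\geqslant\lceil 3n/5\rceil$: since $\lceil 3n/5\rceil=3k$ when $n=5k$ and $=3k+2$ when $n=5k+3$, the bound $5|S|\geqslant 3n-x$ forces $|S|\geqslant\lceil 3n/5\rceil$ as soon as $x\leqslant 0$ in the first case and $x\leqslant 1$ in the second (using integrality of $|S|$, since $5(3k-1)=15k-5<15k-x$ already fails for $x\leqslant 4$ — so one actually has some slack and the real content is pinning down the geometry near $F$).

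Next I would examine the five blocks $\mathcal{B}_{f-2},\dots,\mathcal{B}_{f+2}$ directly. From the Type II pattern $\{u_{f-2},v_{f+1},v_{f+2}\}\subseteq S$ and $N(u_f)\cap S=\es$, vertex $u_{f-1}$ is dominated only by $u_{f-2}$, vertex $u_{f+1}$ only by itself-being-adjacent-to $v_{f+1}$, and one checks which of $u_{f\pm3},v_{f\pm3}$ etc. are forced into $S$. The key sub-claim will be that $\gamma_{f-2}\geqslant 3$ and $\gamma_{f+2}\geqslant 3$ — i.e. the only $i\in F$ that can have $\gamma_i=2$ are $i\in\{f-1,f+1\}$, and even those carry extra constraints. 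To see $\gamma_{f+2}\geqslant 3$: $\mathcal{B}_{f+2}$ already contains $v_{f+1},v_{f+2}$ from $S$, and to dominate $u_{f+3}$ and $u_{f+4}$ one needs a further vertex of $\mathcal{B}_{f+2}$ in $S$ unless $v_{f+4}$ or $u_{f+3},u_{f+4}$ themselves lie outside — a short case check forces $\gamma_{f+2}\geqslant 3$. Symmetrically on the left, using that $u_{f-2}\in S$ forces $v_{f-2},v_{f-4}\notin S$ patterns and $u_{f-3}$ must be dominated from the left. This should give $x\leqslant 2$ with the two potential offenders at $f-1$ and $f+1$.

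Then I would handle the residual $n$-dependence. For $n=5k$, the target is $5|S|\geqslant 15k$, i.e. $\sum\gamma_i\geqslant 15k$; with $x\leqslant 2$ we get $\sum\gamma_i\geqslant 15k-2$, hence $|S|\geqslant 3k-\tfrac{2}{5}$, and integrality gives $|S|\geqslant 3k=\lceil 3n/5\rceil$. For $n=5k+3$, target is $|S|\geqslant 3k+2$, and $\sum\gamma_i\geqslant 3(5k+3)-2=15k+7$ gives $|S|\geqslant 3k+\tfrac{7}{5}$, i.e. $|S|\geqslant 3k+2$ again. So in fact $x\leqslant 2$ suffices for both residues and I would not even need to separate the cases beyond citing the value of $\lceil 3n/5\rceil$; the arithmetic is routine once the structural bound on $x$ is in hand. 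I would close by remarking that if instead one of $f-1,f+1$ does realise $\gamma=2$, the forced vertices in $N^+$ push $\gamma_{f-3}$ or $\gamma_{f+3}$ up to $\geqslant 4$, which one can optionally use to tighten the count, but it is not needed.

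The main obstacle I anticipate is the case analysis verifying $\gamma_{f-2}\geqslant 3$ and $\gamma_{f+2}\geqslant 3$ (and controlling $\gamma_{f\pm1}$): one must track exactly which vertices in the two blocks flanking $\mathcal{B}_f$ are forced into $S$ by the domination requirements on $u_{f\pm3},u_{f\pm4},v_{f\pm3}$, being careful that $u_f$ is deleted and that the Type II pattern already commits three vertices of $\mathcal{B}_f$. This is the kind of picture-chasing the paper does in Figures~\ref{fig5} and \ref{fig9}, and I would present it with an analogous figure for $\mathcal{B}_{f-2},\dots,\mathcal{B}_{f+2}$ rather than in prose.
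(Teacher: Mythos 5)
Your overall strategy is the paper's: bound $x=|\{i\in F:\gamma_i=2\}|$, use Lemmas~\ref{lm:gammageq3} and \ref{lm:j-kneq4} to argue that every $\gamma_i=2$ with $i\notin F$ is averaged against a distinct $\gamma_j\geqslant 4$, conclude $5|S|\geqslant 3n-x$, and finish with ceiling arithmetic. But your key structural sub-claim is wrong: $\gamma_{f-2}\geqslant 3$ does \emph{not} hold for a Type II set, and the ``symmetrically on the left'' argument cannot work because the Type II pattern is asymmetric ($|L_f\cap S|=1$, $|R_f\cap S|=2$). On the right, $\mathcal{B}_{f+2}$ already contains $v_{f+1},v_{f+2}\in S$ and some vertex of $N[u_{f+3}]\subseteq\mathcal{B}_{f+2}\setminus\{v_{f+1},v_{f+2}\}$ must also lie in $S$, which is why $\gamma_{f+2}\geqslant 3$. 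On the left, however, $\mathcal{B}_{f-2}$ contains only $u_{f-2}$ from the Type II pattern, and $u_{f-2}$ already dominates $u_{f-3}$ and $u_{f-1}$, so nothing forces a third vertex of $\mathcal{B}_{f-2}$ into $S$: for instance $\mathcal{B}_{f-2}\cap S=\{u_{f-2},v_{f-4}\}$ with $u_{f-4}$ and $v_{f-3}$ dominated from outside the block is perfectly consistent. Indeed $\gamma_{f-2}=2$ is exactly the troublesome configuration the paper has to fight through in Lemma~\ref{tm:ri=3} (its four cases all start from $\gamma_{f-2}=2$), and the extremal sets built in Lemma~\ref{5kn+1+2} have $\gamma_{f-2}=\gamma_{f-1}=\gamma_{f+1}=2$, so no local case check around $\mathcal{B}_{f-2}$ can ever establish your claim.

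Fortunately the error is not fatal to this particular lemma, because you do not need $x\leqslant 2$. The correct accounting (and the paper's) is: by inspection of the Type II pattern of Figure~\ref{figB}(a), the only indices in $F$ that can have $\gamma_i=2$ are $f-2$, $f-1$, $f+1$ (with $\gamma_f=3$ by definition and $\gamma_{f+2}\geqslant 3$ as above), so $x\leqslant 3$ and $5|S|\geqslant 3n-3$. Your own arithmetic already shows this suffices: for $n=5k$ one gets $|S|\geqslant\left\lceil 3k-\tfrac{3}{5}\right\rceil=3k=\left\lceil\tfrac{3n}{5}\right\rceil$, and for $n=5k+3$ one gets $|S|\geqslant\left\lceil 3k+\tfrac{6}{5}\right\rceil=3k+2=\left\lceil\tfrac{3n}{5}\right\rceil$. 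So drop the false sub-claim about $\gamma_{f-2}$, keep only the right-side forcing for $\gamma_{f+2}$, and your argument becomes exactly the paper's proof; the sharper bound $x\leqslant 2$ is both unobtainable and unnecessary here (it only becomes an issue for the residues $5k+4$, $5k+1$, $5k+2$, which the paper treats separately).
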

\begin{proof} Let $S$ be a Type II set with the smallest
couple number. By inspection on Figure~\ref{figB}(a), only the
elements in $\{\gamma_{f-2},\gamma_{f-1}, \gamma_{f+1}\}$ are
possibly equal to 2 and all other $\gamma_i\geqslant 3$ after
gaining support. Note that $\gamma_{f+2}\geqslant 3$ since
$N[u_{f+3}]\cap S\neq \es$. This yields
$\sum_{i=1}^n\gamma_i=5|S|\geqslant 3n-3$. For $n=5k$,
$|S|\geqslant\left\lceil\frac{3n-3}{5}\right\rceil=\left\lceil\frac{15k-3}{5}\right\rceil=\left\lceil3k-\frac{3}{5}\right\rceil=3k=\left\lceil
\frac{3n}{5} \right\rceil$, and $|S|\geqslant
\left\lceil3k+\frac{6}{5}\right\rceil=3k+2=\left\lceil \frac{3n}{5}
\right\rceil$ for $n=5k+3$.  This completes the proof.
\qed\end{proof}

\begin{definition}
\label{repeatedpatten} Let $S$ be a dominating set of $P_f(n,2)$. A
block $\mathcal{B}_i$ is called a self-contained block if
$\mathcal{B}_i\cap S=\{u_{i-2},v_i,v_{i+1}\}$ $($see
Figure~\ref{fig:selfcontained}$(\mathrm{a}))$.
\end{definition}

\begin{figure}[htb]
\begin{center}
\subfigure[A self-contained block]{
\includegraphics[scale=0.35]{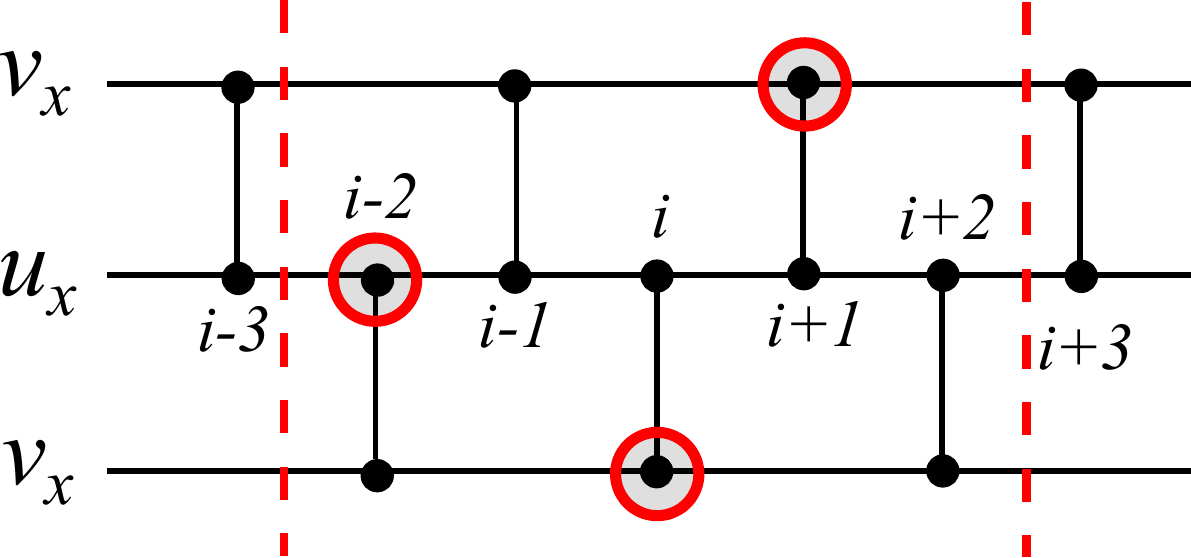}
} \quad \subfigure[Consecutive self-contained blocks]{
\includegraphics[scale=0.35]{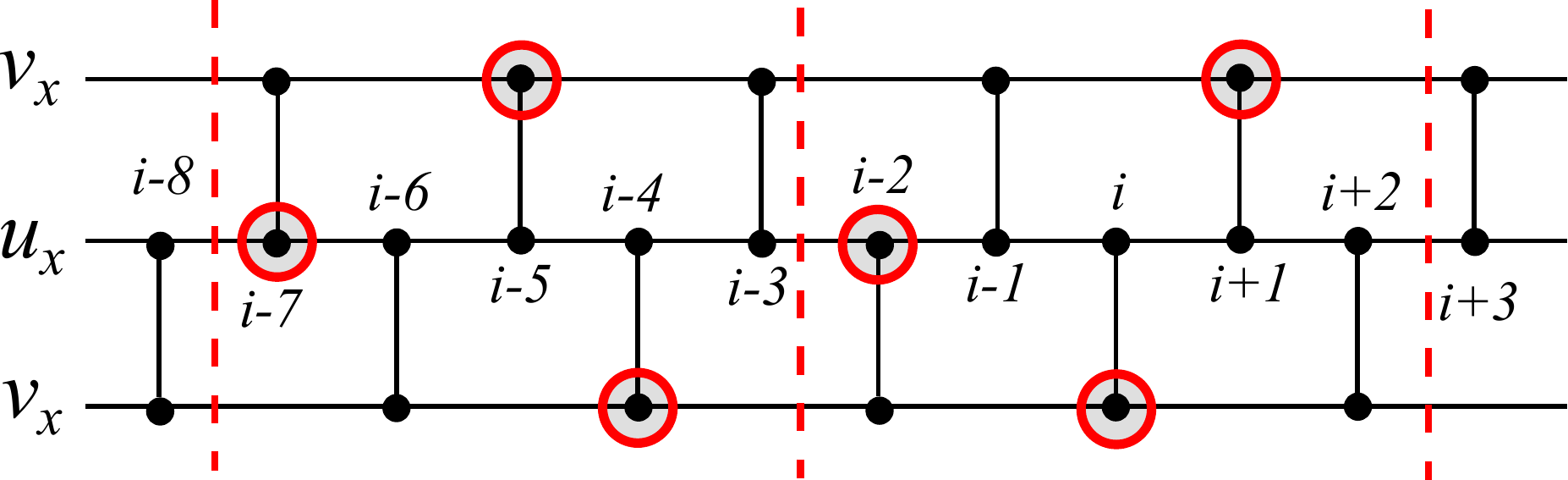}
}  \caption{\label{fig:selfcontained}Self-contained blocks.}
\end{center}
\end{figure}

\begin{proposition}\label{twoselfcontainedblocks}
If both $\mathcal{B}_i$ and $\mathcal{B}_{i-5}$ are self-contained
blocks, then $\gamma_x=3$ for $i-5\leq x\leq i$.
\end{proposition}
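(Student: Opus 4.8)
The plan is to analyze the overlap structure forced by two self-contained blocks five apart. Recall that a self-contained block $\mathcal{B}_i$ has $\mathcal{B}_i\cap S=\{u_{i-2},v_i,v_{i+1}\}$, so in particular $\gamma_i=3$ and, crucially, $u_{i+2}\notin S$ while $u_i\notin S$ and $u_{i-1},u_{i+1}\notin S$ (the three $M_i$ vertices are absent from $S$ except that $v_i$ is present). The same holds with $i$ replaced by $i-5$: $\mathcal{B}_{i-5}\cap S=\{u_{i-7},v_{i-5},v_{i-4}\}$. First I would record exactly which vertices of $S$ lie in the union $\mathcal{B}_{i-5}\cup\mathcal{B}_i$: these two blocks share the vertices indexed $i-4,\dots,i$ on one side, so the constraint from $\mathcal{B}_{i-5}$ fixes $v_{i-4}\in S$ and the constraint from $\mathcal{B}_i$ fixes $v_i,v_{i+1}\in S$ and $u_{i-2}\in S$. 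So within indices $i-7$ through $i+1$ the set $S$ is completely pinned down to $\{u_{i-7},v_{i-5},v_{i-4},u_{i-2},v_i,v_{i+1}\}$.

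Next I would go block by block for $x=i-5,i-4,i-3,i-2,i-1,i$ and count $\gamma_x=|\mathcal{B}_x\cap S|$ using this pinned-down list, keeping in mind that each $\mathcal{B}_x$ spans the five index-pairs $x-2,\dots,x+2$. For $x=i-5$ and $x=i$ we already have $\gamma_x=3$ by hypothesis. For the three ``interior'' blocks $x=i-4,i-3,i-2$ every index pair they touch lies inside $\{i-7,\dots,i+1\}$ — indeed $\mathcal{B}_{i-4}$ touches $i-6,\dots,i-2$; $\mathcal{B}_{i-3}$ touches $i-5,\dots,i-1$; $\mathcal{B}_{i-2}$ touches $i-4,\dots,i$ — so $\gamma_x$ is computed directly by intersecting $\{u_{i-7},v_{i-5},v_{i-4},u_{i-2},v_i,v_{i+1}\}$ with $\mathcal{B}_x$, and in each case this gives exactly three vertices (for instance $\mathcal{B}_{i-3}\cap S=\{v_{i-5},v_{i-4},u_{i-2}\}$, $\mathcal{B}_{i-2}\cap S=\{v_{i-4},u_{i-2},v_i\}$, and symmetrically for $\mathcal{B}_{i-4}$). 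The only block needing an extra argument is $x=i-1$, whose span $i-3,\dots,i+1$ reaches the pinned region completely as well, so again $\mathcal{B}_{i-1}\cap S=\{u_{i-2},v_i,v_{i+1}\}$ and $\gamma_{i-1}=3$; but one should double-check that $u_{i+1}\notin S$, which follows from $\mathcal{B}_i$ being self-contained. Assembling these, $\gamma_x=3$ for all $i-5\le x\le i$.

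The main obstacle is purely bookkeeping: one must be careful that the two self-contained conditions are genuinely consistent (i.e. that the vertices they each force into or out of $S$ do not conflict on the overlap $\{i-4,\dots,i\}$), and that nothing forces a fourth $S$-vertex into one of the interior blocks — this is where one uses that $S$ is a dominating set but the self-contained shape already accounts for all domination within the strip, so no spurious extra vertex is demanded. I would present this as a short case-free verification, listing $\mathcal{B}_x\cap S$ explicitly for each $x$ from $i-5$ to $i$ and reading off $|\mathcal{B}_x\cap S|=3$, perhaps with a sentence noting that consistency of the two hypotheses on the shared index pairs $i-4,\dots,i$ is immediate since both simply assert $v_{i-4},v_i,v_{i+1},u_{i-2}$ behave as in the self-contained pattern. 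No deep idea is required beyond tracking the forced membership; the statement is essentially a rigidity observation about the self-contained pattern tiling two consecutive blocks.
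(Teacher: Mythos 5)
Your proof is correct and is essentially the paper's own argument: the paper disposes of this proposition ``by inspection'' of its figure of two consecutive self-contained blocks, and your block-by-block listing of $\mathcal{B}_x\cap S$ for $x=i-5,\dots,i$ is exactly that inspection written out. One small slip worth fixing: $\mathcal{B}_{i-5}$ (index pairs $i-7,\dots,i-3$) and $\mathcal{B}_i$ (index pairs $i-2,\dots,i+2$) are in fact disjoint rather than sharing the indices $i-4,\dots,i$, so the ``consistency on the overlap'' concern is vacuous; this does not affect your counting, which correctly pins $S$ on indices $i-7,\dots,i+2$ to $\{u_{i-7},v_{i-5},v_{i-4},u_{i-2},v_i,v_{i+1}\}$ and yields $\gamma_x=3$ throughout.
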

\begin{proof} By inspection (see Figure~\ref{fig:selfcontained}(b)), the proposition follows.
\qed\end{proof}

\begin{lemma}\label{lm:ri=3}
For $n=5k+4$ and $\gamma_f = 3$, if $S$ is a Type II set and any two
of $\gamma_{f-2}$, $\gamma_{f-1}$ and $\gamma_{f+1}$ are greater
than 2, then $|S|\geq\left\lceil \frac{3n}{5} \right\rceil$.
\end{lemma}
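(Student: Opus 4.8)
The plan is to run the amortized count sketched at the beginning of Section~\ref{main results}: establish $5|S|=\sum_{i=1}^{n}\gamma_{i}\geqslant 3n-x$, where $x$ is the number of indices $i\in F$ with $\gamma_{i}=2$ that fail to gain support, and then check that the present hypotheses force $x\leqslant 1$. This is enough, because $3n=15k+12\equiv 2\pmod 5$, so already $5|S|\geqslant 3n-1=15k+11$ gives $|S|\geqslant 3k+3=\lceil 3n/5\rceil$.

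First I would isolate the blocks of $F$ that can be deficient. Since $S$ is a Type II set (with the smallest couple number, as assumed throughout), $\mathcal{B}_{f}\cap S=\{u_{f-2},v_{f+1},v_{f+2}\}$, so $\gamma_{f}=3$. Moreover $v_{f+1},v_{f+2}\in\mathcal{B}_{f+2}$ while $N[u_{f+3}]\subseteq\mathcal{B}_{f+2}$ contains neither of them, so the domination of $u_{f+3}$ forces a third vertex into $\mathcal{B}_{f+2}\cap S$ and hence $\gamma_{f+2}\geqslant 3$. Consequently, among the blocks of $F$, only $\gamma_{f-2},\gamma_{f-1},\gamma_{f+1}$ can equal $2$, and by hypothesis at most one of them does; thus $\#\{\,i\in F:\gamma_{i}=2\,\}\leqslant 1$.

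Next I would discharge the deficient blocks lying outside $F$. For each $i\notin F$ with $\gamma_{i}=2$, Lemma~\ref{lm:gammageq3} provides an index $s(i)\in\{i-2,i+2\}$ with $\gamma_{s(i)}\geqslant 4$; assign $i$ to $s(i)$. This map is injective on $\{\,i\notin F:\gamma_{i}=2\,\}$: if $s(i)=s(j)$ for distinct such $i,j$, then $\{i,j\}=\{s(i)-2,\,s(i)+2\}$, so $|i-j|=4$, which Lemma~\ref{lm:j-kneq4} forbids; and $s(i)$ is never itself deficient since $\gamma_{s(i)}\geqslant 4>2$. Because every target block contributes at least $1$ to $\sum_{\gamma_{j}\geqslant 4}(\gamma_{j}-3)$ and is hit at most once, we obtain $\#\{\,i\notin F:\gamma_{i}=2\,\}\leqslant\sum_{\gamma_{j}\geqslant 4}(\gamma_{j}-3)$.

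Finally I would assemble the estimate. Writing $\sum_{i=1}^{n}\gamma_{i}=3n+\sum_{\gamma_{j}\geqslant 4}(\gamma_{j}-3)-\#\{\,i:\gamma_{i}=2\,\}$ and splitting the last term over $F$ and its complement, the outside-$F$ deficits are absorbed by the surplus and what remains is $\sum_{i=1}^{n}\gamma_{i}\geqslant 3n-\#\{\,i\in F:\gamma_{i}=2\,\}\geqslant 3n-1$. Since $\sum_{i=1}^{n}\gamma_{i}=5|S|$ and $n=5k+4$, this reads $5|S|\geqslant 15k+11$, hence $|S|\geqslant 3k+3=\lceil 3n/5\rceil$. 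The one point that needs care is the injectivity in the discharging step: the only way two deficient outside-$F$ blocks could be assigned the same target is to sit at mutual distance $4$, which Lemma~\ref{lm:j-kneq4} excludes; and a target block falling inside $F$ is harmless, since a surplus block ($\gamma_{j}\geqslant 4$) is neither deficient nor the single deficient block of $F$ that we have counted separately.
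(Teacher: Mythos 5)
Your proposal is correct and follows essentially the same route as the paper: it uses the Type II structure plus domination of $u_{f+3}$ to show only $\gamma_{f-2},\gamma_{f-1},\gamma_{f+1}$ can equal $2$ within $F$, invokes Lemmas~\ref{lm:gammageq3} and \ref{lm:j-kneq4} to absorb all deficient blocks outside $F$, and concludes $5|S|\geqslant 3n-1$, hence $|S|\geqslant 3k+3=\lceil 3n/5\rceil$. The only difference is presentational: you make the paper's informal ``gaining support'' averaging explicit as an injective discharging map, which the paper leaves at the level of the discussion preceding Lemma~\ref{lm:TypeIIbcd}.
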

\begin{proof} Analogous to Lemma~\ref{5nn3}, only the elements in
$\{\gamma_{f-2},\gamma_{f-1}, \gamma_{f+1}\}$ are possibly equal to
2 and all other $\gamma_j\geqslant 3$ after gaining support. If any
two of $\gamma_{f-2}$, $\gamma_{f-1}$ and $\gamma_{f+1}$ are greater
than 2, then $\sum_{i=1}^n\gamma_i=5|S|\geqslant 3n-1$. By replacing
$n$ by $5k+4$, this yields $|S|\geqslant \left\lceil
3k+\frac{11}{5}\right\rceil=3k+3$. Clearly,
$\left\lceil\frac{3n}{5}\right\rceil=\left\lceil3k+\frac{12}{5}\right\rceil=3k+3$
when $n=5k+4$. Thus $|S|\geq\left\lceil \frac{3n}{5} \right\rceil$.
This completes the proof. \qed\end{proof}

\begin{lemma}\label{tm:ri=3}
For $n=5k+4$,  if $S$ is a Type II set, then $|S|\geq\left\lceil
\frac{3n}{5} \right\rceil$.
\end{lemma}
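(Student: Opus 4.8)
\textbf{Proof proposal for Lemma~\ref{tm:ri=3}.}

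The plan is to reduce the general Type~II case for $n=5k+4$ to the case already handled in Lemma~\ref{lm:ri=3}, namely the case where at least two of $\gamma_{f-2}$, $\gamma_{f-1}$, $\gamma_{f+1}$ exceed~$2$. So assume $S$ is a Type~II set with the smallest couple number and suppose, for contradiction, that at least two of $\gamma_{f-2}$, $\gamma_{f-1}$, $\gamma_{f+1}$ equal~$2$; our aim is to show $|S|\ge\left\lceil\frac{3n}{5}\right\rceil$ anyway, or else to transform $S$ into a set to which Lemma~\ref{lm:ri=3} applies. First I would enumerate, case by case, which pair among $\{\gamma_{f-2},\gamma_{f-1},\gamma_{f+1}\}$ is equal to~$2$ (recall from Lemma~\ref{lm:ri=3}'s proof that these are the only indices outside a ``supported'' situation that can be~$2$, and that $\gamma_{f+2}\ge 3$ always since $N[u_{f+3}]\cap S\neq\es$). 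Since $S$ is Type~II, $\mathcal{B}_f\cap S=\{u_{f-2},v_f,v_{f+1}\}$ by Definition~\ref{Type II set}; combining this with $\gamma_{f-1}=2$ or $\gamma_{f+1}=2$ forces the local configuration in $\mathcal{B}_{f-1}$ or $\mathcal{B}_{f+1}$ to be essentially a self-contained block (Definition~\ref{repeatedpatten}), because the low count leaves no slack for any deviation from the pattern $\{u_{i-2},v_i,v_{i+1}\}$.

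The key step is then to recognize a chain of consecutive self-contained blocks. If $\gamma_{f-1}=\gamma_{f+1}=2$ (or whichever of the three pairs we are in), I expect to deduce that $\mathcal{B}_f$ together with its neighbors form a run of self-contained blocks $\mathcal{B}_{i}, \mathcal{B}_{i-5}, \ldots$, and by Proposition~\ref{twoselfcontainedblocks}, $\gamma_x=3$ for every $x$ in the overlap of two such blocks. But the critical observation is arithmetic: since $n=5k+4$, the cyclic structure cannot be tiled by blocks all shifted by~$5$, so a run of self-contained blocks must \emph{terminate}, and at the point of termination some $\gamma_j$ is forced up to $\ge 4$ to cover the vertices left undominated by the broken pattern (analogous to the forcing arguments in Lemmas~\ref{lmm:P_j>1} and~\ref{lm:TypeIIbcd}). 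That extra weight compensates for the two indices equal to~$2$, giving $\sum_{i=1}^n\gamma_i=5|S|\ge 3n-1$, hence $|S|\ge\left\lceil\frac{3n-1}{5}\right\rceil=3k+3=\left\lceil\frac{3n}{5}\right\rceil$ as in Lemma~\ref{lm:ri=3}. Alternatively, in the sub-case where the configuration around $\mathcal{B}_f$ is \emph{not} rigid enough to force a self-contained chain, I would apply Proposition~\ref{pp:exchange}-style swaps (as was done to pass from Figure~\ref{figB}(b),(c),(d) to~(a)) to relocate a vertex of $S$ and strictly decrease the couple number or land in the hypothesis of Lemma~\ref{lm:ri=3}, contradicting minimality of the couple number in the former case.

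The main obstacle I anticipate is the bookkeeping in the case analysis: there are a few essentially different ways two of the three indices $\{f-2,f-1,f+1\}$ can be the ones equal to~$2$, and for each I must verify that the self-contained pattern genuinely propagates (i.e., that $\gamma_i=2$ truly pins down $\mathcal{B}_i\cap S=\{u_{i-2},v_i,v_{i+1}\}$ and not some mirror image, using that $S$ is Type~II to break the symmetry near $f$) and then that the $n\equiv 4\pmod 5$ obstruction forces the compensating $\gamma_j\ge 4$ at the right place without double-counting against an already-supported index. A secondary subtlety is handling the indices inside $F$ itself: because $u_f$ is faulty, the usual ``support'' mechanism of Lemmas~\ref{lm:gammageq3}--\ref{lm:j-kneq4} is only guaranteed for $i\notin F$, so near $f$ I must argue directly from the Type~II pattern that the deficit is at most $1$ (not $3$, as the crude bound of Lemma~\ref{5nn3} would allow), which is exactly the improvement that makes the $5k+4$ arithmetic work out.
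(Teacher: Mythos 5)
Your overall architecture does match the paper's: peel off the situation already covered by Lemma~\ref{lm:ri=3}, and in what remains force a chain of self-contained blocks that cannot tile a cycle of length $n=5k+4$, so that the break in the chain supplies the missing weight. However, two of your concrete claims fail as stated. First, you misquote the Type~II pattern: it is $\mathcal{B}_f\cap S=\{u_{f-2},v_{f+1},v_{f+2}\}$, not $\{u_{f-2},v_f,v_{f+1}\}$ (the latter is the self-contained pattern of Definition~\ref{repeatedpatten}); moreover a block $\mathcal{B}_{f\pm1}$ with $\gamma_{f\pm1}=2$ cannot ``be essentially a self-contained block,'' since a self-contained block contains three vertices of $S$, and in fact $\gamma_{f-1}=2$ forces $\mathcal{B}_{f-1}\cap S=\{u_{f-2},v_{f+1}\}$, which has nothing to do with that pattern. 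In the paper the self-contained blocks arise at $\mathcal{B}_{f-5},\mathcal{B}_{f-10},\ldots$, and deriving them is the main labor: one first shows that exactly one vertex of $\{v_{f-3},u_{f-3},v_{f-4},u_{f-4}\}$ lies in $S$ (otherwise two of $\gamma_{f-2},\gamma_{f-1},\gamma_{f+1}$ are already at least $3$ and Lemma~\ref{lm:ri=3} applies), and then, in the one surviving case $v_{f-4}\in S$, eliminates $u_{f-5}\in S$, $v_{f-6}\in S$, and all two-vertex alternatives by exchange arguments, the smallest-couple-number hypothesis and pseudo-couple vertices, before concluding that $\mathcal{B}_{f-5}$ is self-contained and iterating. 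Your sentence ``the low count leaves no slack for any deviation'' is precisely the assertion that needs proof; labelling it bookkeeping does not close it.

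Second, the arithmetic does not close in the worst case. Your contradiction hypothesis permits all three of $\gamma_{f-2},\gamma_{f-1},\gamma_{f+1}$ to equal $2$ (Type~II only guarantees $\gamma_f,\gamma_{f+2}\geqslant 3$), and then a single block with $\gamma_j\geqslant 4$ at the break of the chain gives only $5|S|\geqslant 3n-3+1=3n-2$, i.e.\ $|S|\geqslant 3k+2$, one short of $\left\lceil \frac{3n}{5}\right\rceil=3k+3$. You need a surplus of at least $2$, and the paper's wrap-around analysis actually delivers three heavy blocks: once the chain of self-contained blocks is forced all the way around, one more vertex among $\{u_{5k+4},v_{5k+4},u_{5k+3}\}$ must enter $S$ to dominate $u_{5k+4}$, which pushes $\gamma_{5k+2},\gamma_{5k+3},\gamma_{5k+4}$ up to $4$ and yields $5|S|\geqslant 3n$. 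Without an argument producing at least two such heavy blocks near the break, your count stops at $3n-2$ and the lemma does not follow.
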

\begin{proof} Suppose to the contrary that $|S|< \left\lceil
\frac{3n}{5} \right\rceil$. We claim that exactly one of the
vertices in $\{v_{f-3},u_{f-3},v_{f-4},u_{f-4}\}$ is in $S$. We
argue the claim by contradiction and assume that there are two
vertices in $\{v_{f-3},u_{f-3},v_{f-4},u_{f-4}\}\cap S$. In this
case, if one of $v_{f-3}$ and $u_{f-3}$ and one of $v_{f-4}$ and
$u_{f-4}$ are in $S$, then $\gamma_{f-1}\geq 3$ and
$\gamma_{f-2}\geq 3$. By Lemma~\ref{lm:ri=3}, $|S|\geq\left\lceil
\frac{3n}{5} \right\rceil$, a contradiction. If
$\{v_{f-3},u_{f-3},v_{f-4},u_{f-4}\}\cap S=\{v_{f-4},u_{f-4}\}$,
then $\{v_{f-5},v_{f-3}\}\cap S\neq \es$ so that $v_{f-3}$ is
dominated. This results in $\gamma_{f-3},\gamma_{f-4}\geq 4$ and
$\gamma_{f-5}\geq 3$. Thus $\gamma_{f-1}$ can gain support from
$\gamma_{f-3}$, by Lemma~\ref{lm:ri=3}, $|S|\geq\left\lceil
\frac{3n}{5} \right\rceil$, a contradiction. Thus the claim holds
and $\gamma_{f-2}=2$. Accordingly, we have the following four cases
to consider.

\noindent {\bf Case 1.} $v_{f-3} \in S$.

In this case, $u_{f-5}, v_{f-6}\in S$ so that $u_{f-4}$ and
$v_{f-4}$ are dominated (see Figure~\ref{boundof5k4}(a)). This
results in $\gamma_{f-1} \geqslant 3$, $\gamma_{f-4} \geqslant 4$
and $\gamma_{f-6} \geqslant 3$ since $N[v_{f-7}]\cap S \neq \es$.
The minimum values of $\gamma_i$ for $i=f-6,f-5,\ldots,f+2$ are
$3,3,4,3,2,3,3,2$, and 3, respectively, and $\gamma_{f-2}$ can gain
support from $\gamma_{f-4}$. This leads to $\sum_{i=1}^n\gamma_i
=5|S|\geqslant3n-1$. By using a similar argument as in
Lemma~\ref{lm:ri=3}, $|S|\geqslant \left\lceil \frac{3n}{5}
\right\rceil$, a contradiction. Thus this case is impossible.

\noindent {\bf Case 2.}  $u_{f-3} \in S$.

Since $\gamma_{f-2}=2$ and $u_{f-3} \in S$, both $u_{f-4}, v_{f-4}
\notin S$ (see Figure~\ref{boundof5k4}(b)). Furthermore, $v_{f-6}
\in S$ so that $v_{f-4}$ is dominated. We claim that either
$u_{f-5}$ or $v_{f-5}$ is in $S$. If both $u_{f-5}$  and $v_{f-5}$
are not in $S$, then $\gamma_{f-3}=2$. However, $u_{f-3}$ is a
pseudo-couple vertex and, by Lemma~\ref{lmm:gamma=2}, $S$ does not
have the smallest couple number, a contradiction. Thus this claim
holds and $\gamma_{f-3}\geqslant 3$. Note that $\gamma_{f-6}
\geqslant 3$. The reason is that if $u_{f-6}\in S$, then
$\gamma_{f-6} \geqslant 3$; otherwise, by Lemma~\ref{lmm:P_i=3},
$\gamma_{f-6} \geqslant 3$. The minimum values of $\gamma_i$ for
$i=f-6,f-5,\ldots,f+2$ are $3,3,4,3,2,3,3,2$, and 3, respectively.
Thus $\gamma_{f-2}$ can gain support from $\gamma_{f-4}$. Thus this
case is also impossible.

\noindent {\bf Case 3.}  $v_{f-4} \in S$.

In this case, $v_{f-5} \in S$ so that $v_{f-3}$ is dominated (see
Figure~\ref{boundof5k4}(c)). We claim that $u_{f-5} \notin S$.
Suppose to the contrary that $u_{f-5} \in S$. This yields
$\gamma_{f-3}, \gamma_{f-4} \geqslant 4$ and $\gamma_{f-5},
\gamma_{f-6} \geqslant 3$. This results in the minimum values of
$\gamma_i$ for $i=f-6,f-5,\ldots,f+2$ to be $3,3,4,4,2,2,3,2$, and
3, respectively, and $\gamma_{f-2}$ and $\gamma_{f-1}$ can gain
support from $\gamma_{f-4}$ and $\gamma_{f-3}$, respectively. Hence
$|S|\geqslant \left\lceil \frac{3n-1}{5} \right\rceil=\left\lceil
\frac{3n}{5} \right\rceil$, a contradiction. Thus the claim holds.
When $u_{f-5} \notin S$, at least one vertex in $\{u_{f-6},
v_{f-6},u_{f-7},v_{f-7}\}$ must be in $S$ so that $u_{f-6}$ is
dominated. If two vertices in $\{u_{f-6}, v_{f-6},u_{f-7},v_{f-7}\}$
are in $S$, this results in $\gamma_{f-5}, \gamma_{f-6} \geqslant 4$
and $\gamma_{f-3}, \gamma_{f-4}, \gamma_{f-7}, \gamma_{f-8}
\geqslant 3$. This further implies that $|S|\geqslant \left\lceil
\frac{3n-1}{5} \right\rceil=\left\lceil \frac{3n}{5} \right\rceil$,
a contradiction. Thus at most one of $u_{f-6}, v_{f-6}$, and
$u_{f-7}$ can be in $S$. We claim that $v_{f-6}$ cannot be in $S$
either. If $v_{f-6}\in S$, then $u_{f-8}\in S$ to ensure that
$u_{f-7}$ is dominated. Moreover, $\gamma_{f-8} \geqslant 3$ no
matter whether $u_{f-8}$ is a pseudo-couple vertex or not. This
results in the minimum values of $\gamma_i$ for
$i=f-6,f-5,\ldots,f+2$ to be $4,3,4,3,2,2,3,2$, and 3, respectively.
Hence $|S|\geqslant \left\lceil \frac{3n-1}{5}
\right\rceil=\left\lceil \frac{3n}{5} \right\rceil$, a
contradiction. Thus the claim holds and only one of $u_{f-6}$ and
$u_{f-7}$ can be in $S$. If $u_{f-6}$ is in $S$, then, after
replacing $u_{f-6}$ by $u_{f-7}$, all vertices in $N[u_{f-6}]$ are
also dominated. Thus we only consider the case where $u_{f-7}\in S$.
Note that, in this case, $\mathcal{B}_{f-5}\cap S$ is exactly a
self-contained block. By repeating the above procedure on
$\mathcal{B}_{f-5x}$ for $2\leq x\leq k$, the only possible result
is that all $\mathcal{B}_{f-5x}$ are also self-contained blocks and
$\{v_{5k+2},v_{5k+3}\}\subset S$ (see Figure~\ref{boundof5k4}(d)).
By Proposition~\ref{twoselfcontainedblocks}, we have
$\gamma_{f-x}=3$ for $3\leqslant x\leqslant 5k-2$. Finally, we can
find that at least one vertex in $\{u_{5k+4},v_{5k+4},u_{5k+3}\}$
must be in $S$ so that $u_{5k+4}$ is dominated. This results in
$\gamma_x=4$ for $x=5k+2, 5k+3,5k+4$. Thus
$\gamma_{f-2},\gamma_{f-1}$ and $\gamma_{f+1}$ can gain support from
those vertices. This yields $\sum_{i=1}^n=5|S|\geqslant
3n=\left\lceil \frac{3n}{5} \right\rceil$, a contradiction.

\noindent {\bf Case 4.}  $u_{f-4} \in S$.

In this case, $v_{f-5}$ and $v_{f-6}$ are in $S$ so that $v_{f-3}$
and $v_{f-4}$ are dominated. Since $S'=S-u_{f-4}+v_{f-4}$ is still a
a Type II set with the smallest couple number which is already
considered in Case 3. Therefore, this case is also impossible. This
concludes the proof of the lemma. \qed\end{proof}

\begin{figure}[htb]
\begin{center}
\subfigure[Case 1]{
\includegraphics[scale=0.3]{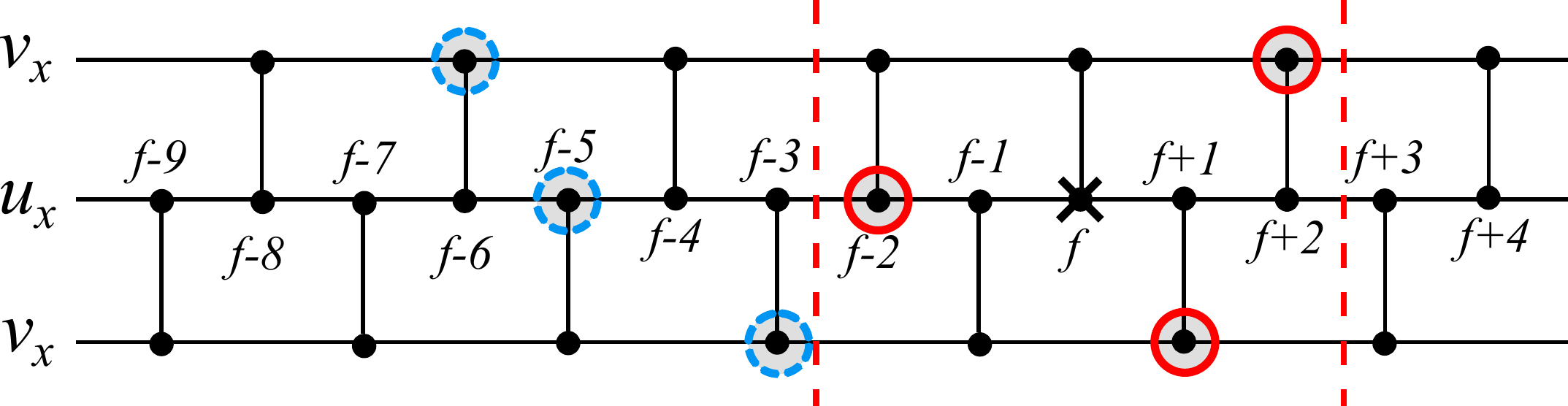}
} \quad \subfigure[Case 2]{
\includegraphics[scale=0.3]{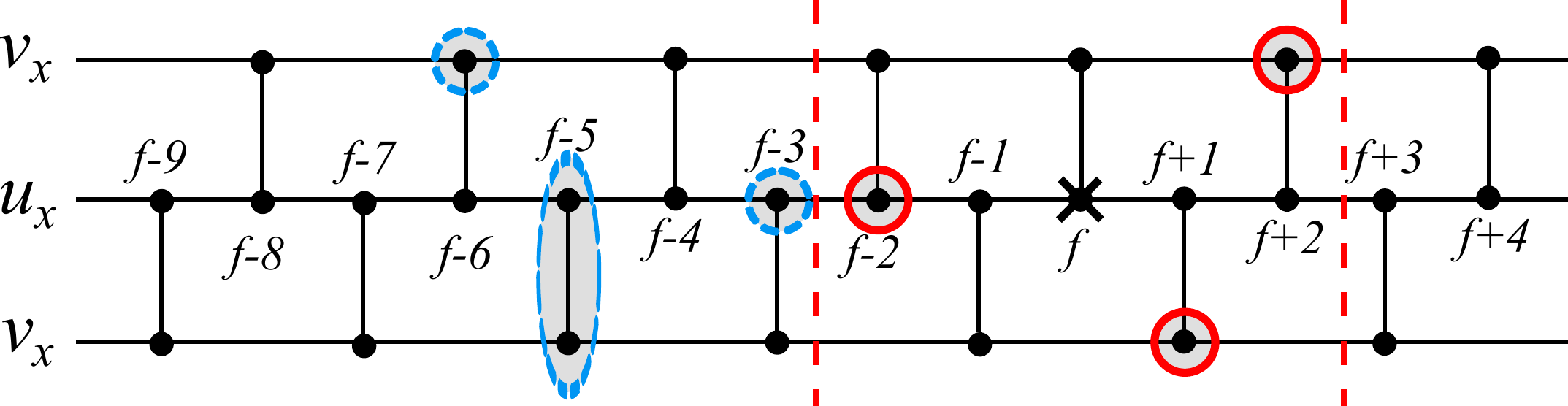}
} \quad \subfigure[Case 3]{
\includegraphics[scale=0.3]{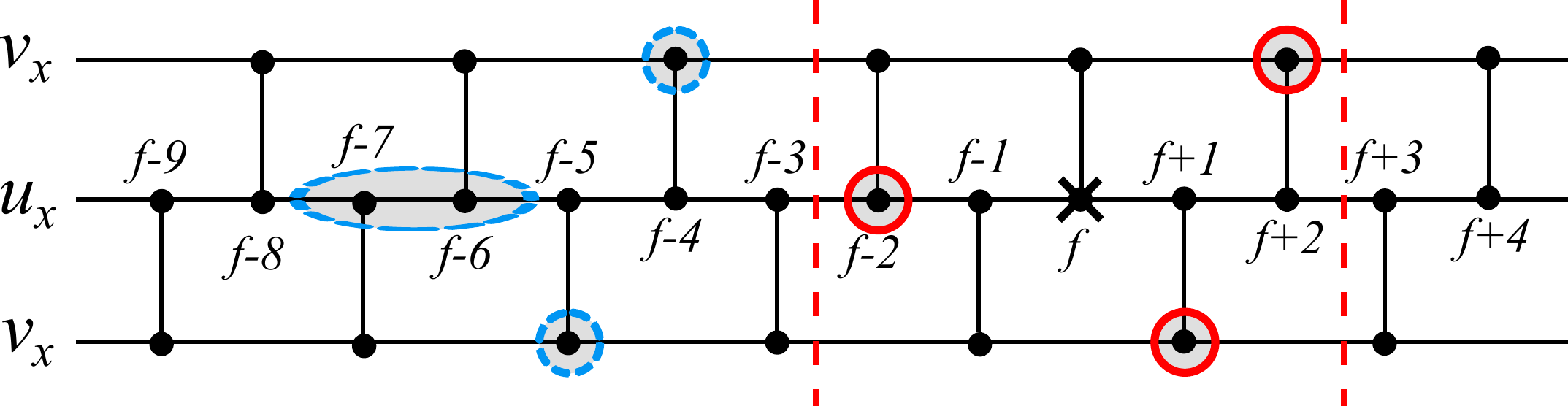}
} \quad \subfigure[The last four pairs of vertices]{
\includegraphics[scale=0.3]{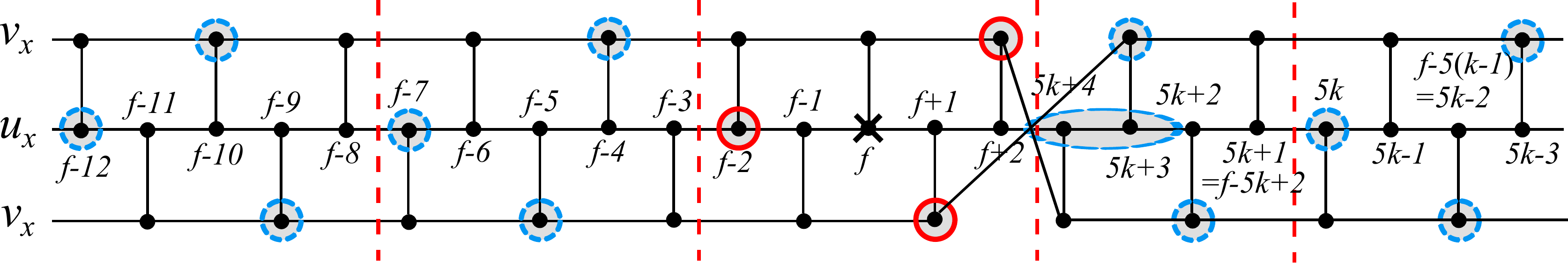}
} \caption{\label{boundof5k4} Illustrations for Lemma~\ref{tm:ri=3}.}
\end{center}
\end{figure}

\begin{lemma}\label{5kn+1+2}
For $n=5k+1$ and $n=5k+2$, if $S$ is a Type II set, then
$|S|\leq\left\lceil \frac{3n}{5} \right\rceil-1$.
\end{lemma}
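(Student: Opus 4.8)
The statement asserts an \emph{upper} bound for $n=5k+1$ and $n=5k+2$, so unlike the preceding lemmas this is a construction, not a counting argument. The plan is to exhibit, for each residue class, an explicit Type II dominating set $S$ of $P_f(n,2)$ with $|S| = \left\lceil \frac{3n}{5}\right\rceil - 1$. Since we get to choose the faulty vertex's location (the statement only requires \emph{some} dominating set of $P_f(n,2)$ of that size, and $u_f$ is symmetric among all $u_i$), I would fix $f$ conveniently, say $f = n$, and then tile the remaining $n-1$ blocks' worth of vertices with a periodic pattern of period $5$ that uses exactly $3$ vertices per period.

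\noindent\textbf{Key steps.} First I would recall the known optimal pattern for $P(n,2)$ from \cite{Ebra09}: a dominating set of $P(5k,2)$ of size $3k$ is obtained by placing, in each block of five consecutive pairs, the three vertices $\{u_{i-2}, v_i, v_{i+1}\}$ — i.e.\ a chain of self-contained blocks $\mathcal B_{5j}$ in the sense of Definition~\ref{repeatedpatten}. This handles all but $r$ pairs when $n = 5k+r$. Second, I would observe that near the faulty vertex $u_f$ we can afford to \emph{save one vertex}: because $u_f$ is deleted, the "wrap-around" region that normally needs careful patching can instead absorb the $r=1$ or $r=2$ leftover columns \emph{and} drop a vertex, since $u_f$ no longer needs to be dominated and the block $\mathcal B_f$ only needs $\gamma_f=3$ with the Type II pattern $\{u_{f-2}, v_{f+1}, v_{f+2}\}$. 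Concretely, I would lay down self-contained blocks $\mathcal B_{f+5}, \mathcal B_{f+10}, \dots$ running most of the way around the cycle, then hand-design the $O(1)$-size patch covering the $r$ extra columns together with $L_f, M_f, R_f$; here the Type II condition $\mathcal B_f \cap S = \{u_{f-2}, v_{f+1}, v_{f+2}\}$ pins down three of the vertices in $\mathcal B_f$, and the remaining freedom is a bounded case check. Third, I would verify: (a) every non-faulty vertex is dominated — routine given the periodic core, with the only real work at the seam; (b) $|S| = 3k = \left\lceil\frac{3(5k+1)}{5}\right\rceil - 1$ when $n=5k+1$ and $|S| = 3k+1 = \left\lceil\frac{3(5k+2)}{5}\right\rceil - 1$ when $n=5k+2$; (c) $S$ is genuinely a Type II set, i.e.\ $\gamma_i \geq 2$ for all $i$ (immediate: each self-contained block contributes, and the seam region is checked directly), $N(u_f)\cap S = \es$ (true by construction since $M_f\cap S=\es$), and $\gamma_f = 3$.

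\noindent\textbf{Main obstacle.} The delicate part is the seam: I must check that the bounded patch covering columns $f+1, \dots, f+r$ (wrapping to $f-2, f-1$) together with the interface to the last self-contained block $\mathcal B_{f-5+\text{(something)}}$ leaves no vertex undominated, \emph{and} simultaneously keeps the count at $3k$ (resp.\ $3k+1$) — i.e.\ the "saved" vertex is real and not silently reintroduced to fix a domination gap. Equivalently, one must confirm that deleting $u_f$ truly buys exactly one vertex: the three forced vertices $\{u_{f-2}, v_{f+1}, v_{f+2}\}$ in $\mathcal B_f$ plus the periodic tail must cover $v_f, u_{f-1}, u_{f+1}$ (all of $M_f$, which is fine since $v_f$'s neighbors include $u_f$ which is gone, $v_{f+2}$, $v_{f-2}$; $u_{f-1}$ is dominated by $u_{f-2}$; $u_{f+1}$ is dominated by $v_{f+1}$) and also $u_{f+3}$ (dominated by the next self-contained block's $u_{f+1}$-analog) without any extra vertex. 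I expect this to come down to drawing the relevant $\approx 12$ columns explicitly (as in Figures~\ref{boundof5k4} and \ref{fig:selfcontained}) and reading off the incidences; the figure-driven verification is the substance of the proof, and the arithmetic identities $\left\lceil\frac{3(5k+1)}{5}\right\rceil = 3k+1$ and $\left\lceil\frac{3(5k+2)}{5}\right\rceil = 3k+2$ are the trivial final line.
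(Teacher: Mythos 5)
Your proposal is correct and follows essentially the same route as the paper: the paper likewise proves this upper bound by exhibiting an explicit dominating set of $P_f(n,2)$ of size $\left\lceil \frac{3n}{5}\right\rceil-1$, built from the Type II pattern $\{u_{f-2},v_{f+1},v_{f+2}\}$ in $\mathcal{B}_f$ together with a chain of self-contained blocks around the rest of the cycle (the construction of Case~3 of Lemma~\ref{tm:ri=3} and Figure~\ref{fig-Kuo-6}), and then invokes minimality of $S$. The only cosmetic difference is that the paper obtains the $n=5k+2$ set by adding the single vertex $v_{5k+2}$ to the $n=5k+1$ construction, whereas you absorb both residues into the seam patch.
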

\begin{proof} By using a
similar argument as in Case 3 of Lemma~\ref{tm:ri=3}, we can
construct a dominating set $S'$ of $P(n,2)$ when $n=5k+1$ (see
Figure \ref{fig-Kuo-6}(a)). Note that all $\gamma_i(S')=3$ for
$1\leqslant i\leqslant n$ except
$\gamma_{f-2}(S')=\gamma_{f-1}(S')=\gamma_{f+1}(S')=2$. Thus
$\sum_{i=1}^n\gamma_i(S')=5|S'|=3n-3=15k$ and $|S'|=3k$.  However,
$\left\lceil \frac{3n}{5} \right\rceil=3k+1$. This yields
$|S'|=\left\lceil \frac{3n}{5} \right\rceil-1$ and
$|S|\leq\left\lceil \frac{3n}{5} \right\rceil-1$.

Similarly, when $n=5k+2$, let $S''=S'\cup \{v_{5k+2}\}$ (see Figure
\ref{fig-Kuo-6}(b)). It can be verified easily that $S''$ is a
dominating set of $P(n,2)-u_f$. Note that all $\gamma_i(S'')=3$ for
$1\leqslant i\leqslant n$ except
$\gamma_{f-2}(S'')=\gamma_{f-1}(S'')=2$. Thus $5|S''|=3n-2=15k+4$
and $|S''|=3k+1$. However, $\gamma(P(n,2))=\left\lceil \frac{3n}{5}
\right\rceil=3k+2$. This yields $|S''|=\left\lceil \frac{3n}{5}
\right\rceil-1$ and $|S|\leq \left\lceil \frac{3n}{5}
\right\rceil-1$. This completes the proof. \qed\end{proof}

\begin{figure}[htb]
\begin{center}
\subfigure[$n=5k+1$]{
\includegraphics[scale=0.3]{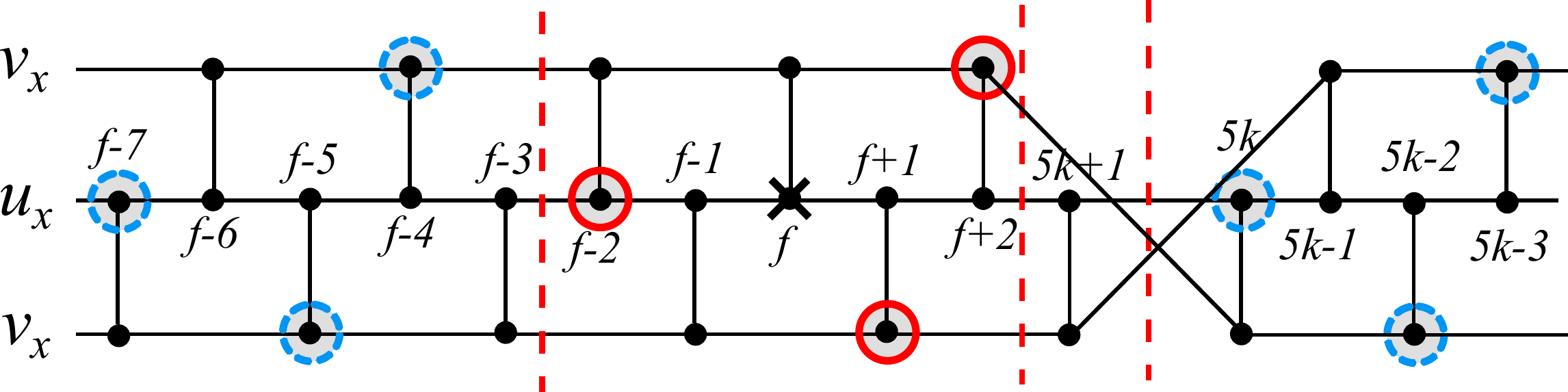}
} \quad \subfigure[$n=5k+2$]{
\includegraphics[scale=0.3]{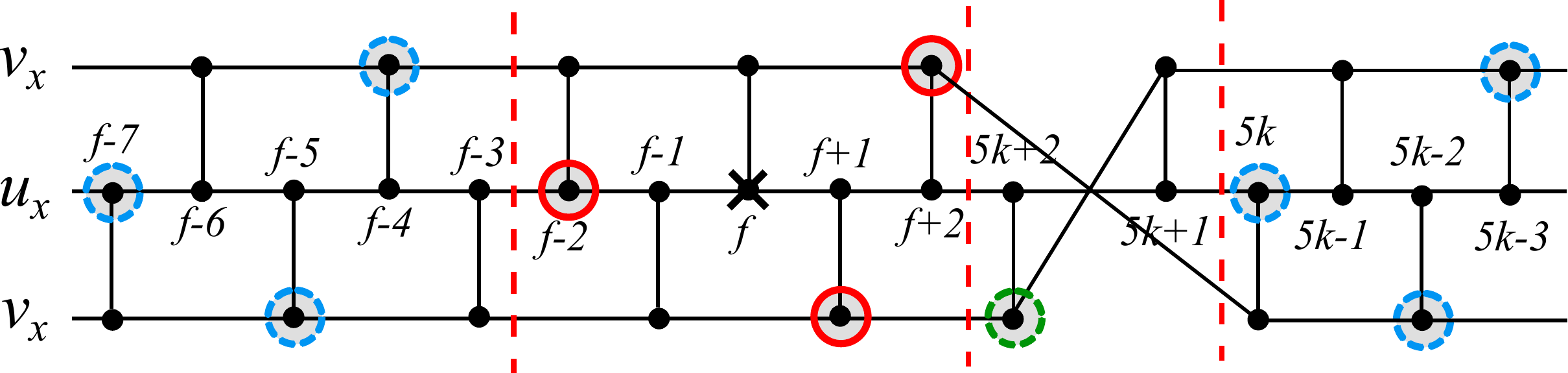}
}  \caption{\label{fig-Kuo-6}The dominating sets in $P_f(n, 2)$ when $n=5k+1$ and $5k+2$.}
\end{center}
\end{figure}

We summarize our results as the following theorem.

\begin{theorem}\label{mainresult}
Assume that $u_f$ is a faulty vertex in $P(n,2)$. Then for $n\geq 3$
\[\gamma(P_f(n,2))= \begin{cases}
\gamma(P(n,2))-1 & \quad \text{if $n=5k+1$ or $5k+2$} \\
\gamma(P(n,2)) & \quad \text{otherwise.}
\end{cases}
\]
\end{theorem}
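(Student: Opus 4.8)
The plan is to assemble Theorem~\ref{mainresult} from the case analyses developed in Sections~\ref{faulty Petersen}--\ref{main results}, splitting according to the residue of $n$ modulo $5$ and, within each residue, according to the value of $\gamma_f$. By Corollary~\ref{coro:gammaPn2} we already have $\lceil 3n/5\rceil - 1 \le \gamma(P_f(n,2)) \le \lceil 3n/5\rceil$, so in every case it suffices to decide which of the two values is attained, i.e.\ to establish the matching lower bound (or, in the $5k+1,5k+2$ cases, the matching upper bound). By Lemma~\ref{P1} we may assume the minimum dominating set $S$ satisfies $N(u_f)\cap S = \es$, and by Lemma~\ref{lmm:P_j>1} we may take $S$ to be a Type~I set; Lemma~\ref{lmm:P_i=3} then forces $\gamma_f \ge 3$, so only $\gamma_f = 3$ and $\gamma_f \ge 4$ need to be treated.

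First I would dispose of the case $\gamma_f \ge 4$: by the reductions in Section~\ref{faulty Petersen} (Propositions~\ref{pp:exchange} and \ref{pp:L_iandR_i} and the transformations following Figures~\ref{figB} and \ref{fig8}) such an $S$ can be taken to be a Type~III set, and Corollary~\ref{lm:gamma_i>4} gives $|S| \ge \lceil 3n/5\rceil$ outright — so this case never yields a set smaller than $\gamma(P(n,2))$, uniformly in $n$. It remains to handle $\gamma_f = 3$, where $S$ may be assumed to be a Type~II set with the smallest couple number. Here the residue of $n$ matters. For $n = 5k$ or $5k+3$, Lemma~\ref{5nn3} gives $|S| \ge \lceil 3n/5\rceil$; for $n = 5k+4$, Lemma~\ref{tm:ri=3} gives the same bound. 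Combining with $\gamma_f \ge 4$ case and Corollary~\ref{coro:gammaPn2}, these three residues give $\gamma(P_f(n,2)) = \lceil 3n/5\rceil = \gamma(P(n,2))$.

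For $n = 5k+1$ and $n = 5k+2$, the direction flips: Lemma~\ref{5kn+1+2} exhibits an explicit Type~II dominating set of size $\lceil 3n/5\rceil - 1$, so $\gamma(P_f(n,2)) \le \lceil 3n/5\rceil - 1$, and Corollary~\ref{coro:gammaPn2} supplies the matching lower bound $\lceil 3n/5\rceil - 1$. Hence $\gamma(P_f(n,2)) = \lceil 3n/5\rceil - 1 = \gamma(P(n,2)) - 1$ by Theorem~\ref{pre:gammaPn2}. Assembling the five residue classes yields exactly the claimed dichotomy. One small bookkeeping point I would be careful about: in the $5k+1$, $5k+2$ cases one must also check that no Type~I set with $\gamma_f \ge 4$ (hence of size $\ge \lceil 3n/5\rceil$) could interfere — but it cannot, since we are proving an \emph{upper} bound there and the explicit construction already realizes $\lceil 3n/5\rceil - 1$, which by Corollary~\ref{coro:gammaPn2} is best possible.

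\textbf{Main obstacle.} Almost all the real work has been pushed into Lemma~\ref{tm:ri=3} (the $n = 5k+4$, $\gamma_f = 3$ lower bound), whose proof propagates a self-contained-block structure around the whole cycle; the theorem itself is then just a case-split bookkeeping exercise. So the "hard part" of the theorem as stated is merely to verify that the five residue classes, crossed with the two ranges of $\gamma_f$, are exhaustively covered by the lemmas cited, and that the reductions to Type~II/Type~III sets (which rely on relabelling and reversing $\mathcal{B}_f$) are invoked with the right orientation in each instance — in particular that the asymmetry fixed by Proposition~\ref{pp:L_iandR_i} ($|L_f\cap S| = 1$, $|R_f\cap S| = 2$) is consistently maintained throughout.
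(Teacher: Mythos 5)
Your proposal is correct and takes essentially the same route as the paper: the paper's own proof is exactly this bookkeeping assembly, combining Corollary~\ref{coro:gammaPn2} with Corollary~\ref{lm:gamma_i>4}/Lemma~\ref{lm:TypeIIbcd} for the $\gamma_f\geq 4$ case and Lemmas~\ref{5nn3}, \ref{tm:ri=3}, and \ref{5kn+1+2} for the Type~II case, split by the residue of $n$ modulo $5$.
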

\begin{proof}
By Corollaries~\ref{coro:gammaPn2} and \ref{lm:gamma_i>4} and
Lemmas~\ref{lm:TypeIIbcd} and \ref{5nn3},
$\gamma(P_f(n,2))=\gamma(P(n,2))$ when $n=5k$ and $5k+3$. By
Corollaries~\ref{coro:gammaPn2} and \ref{lm:gamma_i>4} and
Lemmas~\ref{lm:TypeIIbcd} and \ref{tm:ri=3},
$\gamma(P_f(n,2))=\gamma(P(n,2))$ when $5k+4$. By
Corollaries~\ref{coro:gammaPn2} and \ref{lm:gamma_i>4} and
Lemmas~\ref{lm:TypeIIbcd} and \ref{5kn+1+2}, $\left\lceil
\frac{3n}{5} \right\rceil-1$ when $n=5k+1$ or $5k+2$. This completes
the proof. \qed\end{proof}

\section{Concluding remarks}
\label{Conclusion}

In this paper, we show that $\gamma(P_f(n,2))= \gamma(P(n,2))-1$ if
$n=5k+1$ or $5k+2$; otherwise, $\gamma(P_f(n,2))=\gamma(P(n,2))$.
Our results can be applied to the alteration domination number of
$P(n,2)$. By Theorem~\ref{mainresult}, we can find the lower and
upper bounds for $\mu(P(n,2))$ as follows: $\mu(P(n,2))=1$ if
$n=5k+1$ or $5k+2$; otherwise, $\mu(P(n,2))\geq 2$. As a further
study, it is interesting to find out the exact value of
$\mu(P(n,2))$. On the bondage problem in $P(n,2)$, it is clear that
the domination number is still $\left\lceil \frac{3n}{5}
\right\rceil$ after removing any edge from $P(n,2)$. By
Theorem~\ref{mainresult}, we can find that $2\leqslant
b(P(n,2))\leqslant 3$ if $n=5k$, $5k+3$ or $5k+4$. It is also
interesting to find out the exact value of $b(P(n,2))$.


\end{document}